\newtheorem{theorem}{Theorem}[section]
\newtheorem{corollary}{Corollary}[section]
\newtheorem{lemma}{Lemma}[section]
\newtheorem{proposition}{Proposition}[section]
\newcommand{\cFE}{c_{{\mathsf{FE}}}} 
\newcommand{\cP}{c_{\mathsf{P}}}
\newcommand{\cBK}{c_{\mathsf{BK}}}
\newcommand{\cRSW}{c_{\mathsf{RSW}}}
\newcommand{\cN}{c_{\mathsf{N}}}
\newcommand{\sfF}{\mathsf{F}}
\newcommand{\calC}{\mathcal{C}}
\newcommand{\calD}{\mathcal{D}}
\newcommand{\calE}{\mathcal{E}}
\newcommand{\calH}{\mathcal{H}}
\newcommand{\calR}{\mathcal{R}}
\newcommand{\bbE}{\mathbb{E}}
\newcommand{\bbL}{\mathbb{L}}
\newcommand{\bbP}{\mathbb{P}}
\newcommand{\bbR}{\mathbb{R}}
\newcommand{\bbT}{\mathbb{T}}
\newcommand{\bbV}{\mathbb{V}}
\newcommand{\bbZ}{\mathbb{Z}}
\newcommand{\eqref}[1]{(\ref{#1})}
\newcommand{\Var}{\operatorname{\bbV\mathrm{ar}}}
\newcommand{\1}{\mathbf{1}}
\newcommand{\ep}{\varepsilon}
\begin{document}
\begin{frontmatter}

\title{A quantitative Burton--Keane estimate under strong FKG condition}
\runtitle{A quantitative Burton--Keane estimate}

\begin{aug}
\author[A]{\fnms{Hugo}~\snm{Duminil-Copin}\thanksref{m1,T1}\ead
[label=hugo]{hugo.duminil@unige.ch}},
\author[B]{\fnms{Dmitry}~\snm{Ioffe}\thanksref{m2,T2}\ead
[label=dima]{ieioffe@ie.technion.ac.il}}
\and
\author[A]{\fnms{Yvan}~\snm{Velenik}\corref{}\thanksref{m1,T1}\ead
[label=yvan]{yvan.velenik@unige.ch}}
\runauthor{H. Duminil-Copin, D. Ioffe and Y. Velenik}
\thankstext{T1}{Supported by the NCCR SwissMap founded by the Swiss NSF.}
\thankstext{T2}{Supported by Technion research Grant 2020225.}
\affiliation{University of Geneva\thanksmark{m1} and
Technion\thanksmark{m2}}

\address[A]{H. Duminil-Copin\\
Y. Velenik\\
Section de Math\'ematiques\\
Universit\'e de Gen\`eve\\
1211 Gen\`eve 4\\
Switzerland\\
\printead{hugo}\\
\phantom{E-mail:\ }\printead*{yvan}}
\address[B]{D. Ioffe\\
Faculty of IE\&M\\
Technion\\
Haifa 32000\\
Israel\\
\printead{dima}}
\end{aug}

%
\received{\smonth{10} \syear{2014}}
%
\revised{\smonth{6} \syear{2015}}

%
\begin{abstract}
We consider translationally-invariant percolation models on $\mathbb Z^d$
satisfying the finite energy and the FKG properties. We provide
explicit upper
bounds on the probability of having two distinct clusters going from the
endpoints of an edge to distance $n$ (this corresponds to a finite size version
of the celebrated Burton--Keane
[\textit{Comm. Math. Phys.} \textbf{121} (1989) 501--505]
argument proving
uniqueness of
the infinite-cluster). The proof is based on the generalization of a reverse
Poincar\'e inequality proved in Chatterjee and Sen (2013).
As a consequence, we obtain
upper bounds on the probability of the so-called four-arm event for planar
random-cluster models with cluster-weight $q\ge1$.
\end{abstract}

%
\begin{keyword}[class=AMS]
\kwd[Primary ]{60K35}
\kwd[; secondary ]{82B20}
\kwd{82B43}
\end{keyword}

\begin{keyword}
\kwd{Reverse Poincar\'e inequality}
\kwd{dependent percolation}
\kwd{FK percolation}
\kwd{random cluster model}
\kwd{four-arms event}
\kwd{Burton--Keane theorem}
\kwd{negative association}
\end{keyword}
%
\end{frontmatter}

\section{Introduction and main result}
This article is devoted to deriving a weak reverse Poincar\'e-type inequality
for percolation models satisfying strong association
and finite-energy properties, and examining some of its consequences. Let
$\Lambda$ be a finite set and consider a
percolation model on $\Lambda$, that is, a random binary field
$\omega\in\{0,1\}^\Lambda$. The value of the field at $i\in\Lambda
$ is denoted
by $\omega_i$, and the field on the complementary set $\Lambda
\setminus i$ is
denoted by $\omega^i$. The law of $\omega$ on $\{0,1\}^\Lambda$ is
denoted by~$\bbP$. There is a standard partial order $\prec$ on $\{0,1\}^\Lambda
$, and a
function $f$ on $\{0,1\}^\Lambda$ is said to be nondecreasing if $f
(\omega)
\leq
f (\psi)$ whenever $\omega\prec\psi$. An event $A\subset\{0,1\}
^\Lambda$ is
said to be nondecreasing if its indicator function $\1_A$ is.

We will be interested in percolation models satisfying the following two
conditions:

\begin{longlist}[($\mathsf{FKG}$)]
\item[($\mathsf{FE}$)] \emph{Finite energy}: There exists
$\cFE>0$ such that, for any $i\in\Lambda$ and
$\omega\in\{0,1\}^\Lambda$,
%
\begin{equation}
\label{eq:FE} \bbP(\omega_i =1|\omega_j,j\ne i)\in(\cFE,
1-\cFE).
\end{equation}

\item[($\mathsf{FKG}$)] \emph{Strong positive association}:
For any $i\in\Lambda$ and $\xi\prec\psi$ in $\{0,1\}^{\Lambda
\setminus\{i\}}$,
%
\begin{equation}
\label{eq:strongFKG} \bbP(\omega_i =1|\omega_j=
\xi_j,j\ne i)\leq\bbP( \omega_i =1|\omega_j=
\psi_j,j\ne i).
\end{equation}
\end{longlist}

Recall that \eqref{eq:strongFKG} is equivalent to the so-called FKG lattice
condition; see \cite{Gri06}, Theorem (2.24). For a discussion of the
relation between this condition and the weaker condition of positive
association (characterized by the FKG inequality), we refer the reader
to \cite{ACCN}.

Before stating the theorem, let us define two more objects. For a
configuration $\omega$ and $i\in\Lambda$, define the configurations
$\omega^i\times1$ and $\omega^i\times0$ obtained from $\omega$ by
changing the
state $i$ to $1$ and $0$, respectively. Define
\[
\nabla_i f (\omega) \stackrel{\mathrm{def}} {=}f \bigl(
\omega^i\times1 \bigr)- f \bigl(\omega^i\times 0 \bigr).
\]
Also,
for a nondecreasing event $A$,
define the set $\operatorname{Piv}_i(A)$ of configurations $\omega$ such that
$\omega^i\times1\in A$ and $\omega^i\times0\notin A$. Equivalently,
$\operatorname{Piv}_i(A) \stackrel{\mathrm{def}}{=} \{\omega:
\nabla_i \1_A (\omega)=1  \}$.

\begin{theorem}
\label{thm:BK-Var}
Consider a percolation model on a finite set $\Lambda$ satisfying
$(\mathsf{FE})$ and $(\mathsf{FKG})$. Then there exists $\cP=
\cP(\cFE)>0$ such that, for any nondecreasing function
$f:\{0,1\}^\Lambda\longrightarrow\bbR$,
%
\begin{equation}
\label{eq:BK-Var} \Var \bigl( f (\omega) \bigr) \geq \cP %
\sum
_{i\in\Lambda} \bigl( \bbE[\nabla_i f]
\bigr)^2.
\end{equation}
In particular, for any nondecreasing event $A$
%
\begin{equation}
\label{eq:main} \bbP(A) \bigl(1-\bbP(A) \bigr) \geq\cP \sum
_{i\in\Lambda} \bbP \bigl(\operatorname{Piv}_i(A)
\bigr)^2.
\end{equation}
\end{theorem}

We emphasize that the constant $\cP$ is not depending on the size of
$\Lambda$.
One may think of this theorem as a weak reverse Poincar\'e inequality. Indeed,
when $ \{\omega_i: i\in\Lambda  \}$ are independent, the
standard discrete
Poincar\'e inequality (see, e.g., \cite{GS12}) states that
%
\begin{equation}
\label{eq:directP} \bbP(A) \bigl(1-\bbP(A) \bigr) \leq\frac{1}{4}\sum
_{i\in\Lambda} \bbP \bigl(\operatorname{Piv}_i(A) \bigr).
\end{equation}
In the independent case, $\bbP (\operatorname{Piv}_i(A) ) =
\bbP(A|\omega_i=1)-\bbP(A|\omega_i=0) \stackrel{\operatorname
{def}}{=}I_A (i)$ is the so-called
\emph{influence} of $i$ on $A$. Let us also mention that some
inequalities for
influences in models with dependency have been obtained by encoding strongly
positively-associated measures in terms of the Lebesgue measure on the
hypercube
$[0,1]^\Lambda$. Nevertheless, these inequalities bound influences
from below;
see \cite{Gri06}, Theorem (2.28) and \cite{GraGri11}. They are therefore
not directly relevant here.

Inequality \eqref{eq:BK-Var} was derived in the independent Bernoulli
case in \cite{ChS13}. The latter work was one of the motivations for
our study
of dependent models here.

Our proof of \eqref{eq:BK-Var} hinges on the following simple but apparently
new observation, which may be of independent interest. Fix $0<p<1$;
given a realization $\omega\in\{0,1\}^\Lambda$ of the percolation
model, we
construct a field $\sigma\in\{0,1\}^\Lambda$ of (conditionally on
$\omega$)
independent random variables, $\sigma_i$ taking value $1$ with probability
\[
\widehat{\bbP}(\sigma_i = 1 |\omega)= \frac{p\cdot\omega_i}{\bbP(\omega_i=1|\omega^i)},
\]
for each $i\in\Lambda$. Then the distribution of the field $\sigma$, once
integrated over $\omega$, enjoys a form of \emph{negative}
dependence. Namely,
for any $i\in\Lambda$ and for any nondecreasing functions
$f:\{0,1\}^{\Lambda\setminus\{i\}}\rightarrow\bbR$ and
$g:\{0,1\}\rightarrow\bbR$,
\[
\widehat{\bbE} \bigl[ f \bigl(\sigma^i \bigr)g(\sigma_i)
\bigr] \leq \widehat{\bbE} \bigl[f\bigl(\sigma^i\bigr) \bigr]
\widehat{\bbE} \bigl[g(\sigma _i) \bigr].
\]
This is proved in Theorem~\ref{thm:uZ} below, together with additional relevant
properties.

\section{Applications}
\subsection{Some examples of percolation models}
Our applications to percolation models will be mostly dealing with connectivity
properties of the graph induced by $\{i\in\Lambda:\omega_i=1\}$. For
simplicity, we will focus on \emph{bond percolation} models---similar results
would also hold for so-called \emph{site percolation} models. The set
$\Lambda$
is now the edge-set $E_G$ of a finite graph $G=(V_G,E_G)$. The edge $i$
is said
to be \emph{open} (resp., \emph{closed}) if $\omega_i=1$ (resp.,
$\omega_i=0$).
The
configuration $\omega$ can therefore be seen as a subgraph of $G$ with vertex
set $V_G$ and edge set composed of open edges. Two vertices $x$ and $y$ are
said to be \emph{connected} if they belong to the same connected
component of
$\omega$ (we denote the event that $x$ and $y$ are connected by
$x\longleftrightarrow y$). Connected components of $\omega$ are called
\emph{clusters}.

The most classical example of a percolation model is provided by Bernoulli
percolation. This model was introduced by Broadbent and Hammersley in
the 1950s~\cite{BroHam57}. In this model, each edge $i$ is open with
probability
$p$, and closed with probability $1-p$ independently of the states of
the other
edges. For general background on Bernoulli percolation, we refer the
reader to
the books \cite{Gri99,Kes82}.

More generally, the states of edges may not be independent. In such
case, we
speak of a \emph{dependent} percolation model. Among classical
examples, we
mention the \emph{random-cluster model} (or \emph{Fortuin--Kasteleyn
percolation})
introduced by Fortuin and Kasteleyn in \cite{ForKas72}. Let $o(\omega
)$ be the
number of open edges in $\omega$, $c(\omega)$ be the number of closed
edges and
$k(\omega)$ be the number of clusters. The probability measure $\phi_{p,q,G}$
of the random-cluster model on a finite graph $G$ with parameters $p\in[0,1]$
and
$q>0$ is defined by
\[
\phi_{p,q,G} \bigl( \{\omega \}\bigr) \stackrel{\mathrm{def}} {=}
\frac{p^{o(\omega)}(1-p)^{c(\omega)}q^{k(\omega)}}{Z_{p,q,G}}
\]
for every configuration $\omega$ on $G$, where $Z_{p,q,G}$ is a normalizing
constant referred to as the \emph{partition function}.

The random-cluster models satisfy ($\mathsf{FE}$) for $q>0$ and
($\mathsf{FKG}$) for any $q\ge1$. For this reason, random-cluster
models are
good examples of models satisfying our two assumptions, but they are
not the
only ones. The uniform spanning tree ($\bbP$ is simply the uniform
measure on
trees containing every vertices of $G$) is a typical example of a model not
satisfying ($\mathsf{FE}$).

Our applications provide upper bounds on the probability of having
two distinct clusters from the inner to the outer boundaries of annuli.
In two dimensions because of dual connections, the
usual name would be four-arm type events, namely probabilities of
having two
long disjoint clusters attached to two vertices of a given edge.

In
order to deduce such estimates for individual bonds from \eqref{eq:BK-Var}
or \eqref{eq:main}, we need to assume some form of translation
invariance.

\subsection{First application}
To give a simple illustration of how Theorem~\ref{thm:BK-Var} might be
put to
work, let us mention the following result. Consider the $d$-dimensional torus
$\bbT_n^{(d)}$ of size $2n+1$ and denote by $\tilde A_2^e(n)$ the event
that
the edge $e$ is pivotal for the existence of an open circuit of nontrivial
homotopy in $\bbT_n^{(d)}$.

\begin{proposition}\label{prop:4 arm torus}
Let $d\ge2$, there exists
$c_{\tilde{\mathsf{A}}_2}=c_{\tilde{\mathsf{A}}_2}(\cFE,d)>0$ such
that, for
every $n\ge1$ and any edge $e$ of $\bbT_n^{(d)}$,
\[
\bbP\bigl[\tilde A_2^e(n)\bigr]\le\frac{c_{\tilde{\mathsf{A}}_2}}{n^{d/2}},
\]
where $\bbP$ is the law of an arbitrary translation invariant
percolation model
on $\bbT_n^{(d)}$ satisfying $(\mathsf{FE})$ and $(\mathsf{FKG})$.
\end{proposition}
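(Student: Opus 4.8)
The plan is to deduce the bound directly from Theorem~\ref{thm:BK-Var}, in the form~\eqref{eq:main}, applied to a single well-chosen monotone event on the edge-set $\Lambda$ of $\bbT_n^{(d)}$. I would take $A=A(n)$ to be the event that the open subgraph contains a circuit of non-trivial homotopy in $\bbT_n^{(d)}$. Since enlarging the set of open edges cannot destroy a circuit, $A$ is non-decreasing; hence $\mathrm{Piv}_e(A)$ is well-defined and, straight from the definitions, $\tilde A_2^e(n)=\mathrm{Piv}_e(A)$ for every edge $e$. The next point is that $A$ is invariant under the translations of the torus, which permute the edges of $\bbT_n^{(d)}$ while preserving its topology (a translation is homotopic to the identity, so it maps non-contractible circuits to non-contractible circuits); together with the assumed translation invariance of $\bbP$, this forces $\bbP(\mathrm{Piv}_e(A))$ to depend on $e$ only through its coordinate direction, since the translation group acts transitively on the edges of each fixed direction.

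With these observations, I would apply~\eqref{eq:main} to $A$, bound the left-hand side by $\tfrac14$, and on the right-hand side keep only the $(2n+1)^d$ edges parallel to one fixed coordinate direction --- all of which, by the previous paragraph, contribute the same term $\bbP(\tilde A_2^e(n))^2$:
\[
\tfrac14 \;\geq\; \bbP(A)\bigl(1-\bbP(A)\bigr) \;\geq\; \cP\sum_{i\in\Lambda}\bbP\bigl(\mathrm{Piv}_i(A)\bigr)^2 \;\geq\; \cP\,(2n+1)^d\,\bbP\bigl(\tilde A_2^e(n)\bigr)^2 .
\]
Solving for the pivotality probability yields $\bbP(\tilde A_2^e(n))\le\bigl(2\sqrt{\cP}\,(2n+1)^{d/2}\bigr)^{-1}$, and since $2n+1\ge n$ for $n\ge1$ this is at most $c_{\tilde{\mathsf{A}}_2}\,n^{-d/2}$ with $c_{\tilde{\mathsf{A}}_2}=(2\sqrt{\cP}\,)^{-1}$. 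As $\cP$ depends only on $\cFE$ by Theorem~\ref{thm:BK-Var}, the resulting constant depends only on $\cFE$ and $d$, as required.

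I do not expect a serious obstacle: once~\eqref{eq:main} is available the argument is essentially one substitution. The only points needing (minor) care are verifying that ``there exists a non-contractible open circuit'' is indeed a non-decreasing event --- which is what makes Theorem~\ref{thm:BK-Var} applicable and matches it to the pivotality event --- and invoking translation invariance to replace $\sum_{i}\bbP(\mathrm{Piv}_i(A))^2$ by $(2n+1)^d$ equal terms, which is precisely where the exponent $d/2$ originates. If one wished to avoid even this mild point, $A$ could be split into the $d$ events ``there is an open circuit winding in coordinate direction $k$'' and the resulting estimates summed over $k$, but this refinement is not needed here.
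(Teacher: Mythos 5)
Your proof is correct and follows essentially the same route as the paper: apply \eqref{eq:main} to the non-decreasing event ``there exists an open circuit of non-trivial homotopy,'' note that $\tilde A_2^e(n)=\mathrm{Piv}_e(A)$ by definition, and use translation invariance to restrict the sum over $i\in\Lambda$ to the $(2n+1)^d$ edges sharing the direction of $e$, all of which contribute the same pivotality probability. The only (harmless) difference is cosmetic: you bound $\bbP(A)(1-\bbP(A))$ by $\tfrac14$ rather than by $1$, giving a marginally better constant.
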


Note that $\tilde A_2^e(n)$ is basically the event that there are two disjoint
clusters emanating from the end-points of $e$ and going to distance
$n$, with
some additional topological requirement on the macroscopic structure of these
clusters (among these requirements, they should join into a cluster of $
\bbT_n^{(d)}$). This additional condition is not so nice, and it does not
directly apply to models on $\bbZ^d$. We would like to replace this by the
event that there are two disjoint clusters going from the end-points of
\emph{some fixed bond} $e$ to distance $n$.
Let $A_2^e(n)$ be the event that there are two disjoint clusters going from
the endpoints of the edge $e$ to distance $n$.

In the next two applications, we explain two ways of deriving upper
bounds on
$\bbP(A_2^e(n))$.

\subsection{A quantitative Burton--Keane argument}
\label{sub:QBK}
Our second application is an extension of the results of \cite{ChS13} to
arbitrary bond percolation models $\bbP$ on $\bbZ^d$ which satisfy
$(\mathsf{FE})$, $(\mathsf{FKG})$ and are invariant under translations:
\begin{longlist}[$(\mathsf{TI})$]
\item[$(\mathsf{TI})$] The measure $\bbP$ is invariant under shift
$\tau_x:\{0,1\}^{\bbZ^d}\rightarrow\{0,1\}^{\bbZ^d}$ defined by
\[
\tau_x(\omega)_{(u,v)}\stackrel{\mathrm{def}} {=}\omega
_{(u+x,v+x)}\qquad \forall u,v\in \bbZ^d.
\]
\end{longlist}

\begin{theorem}\label{prop:quantitative BK}
Consider a percolation model on $\bbZ^d$ satisfying $(\mathsf{FE})$,
$(\mathsf{FKG})$ and $(\mathsf{TI})$. Then there exists $\cBK>0$
such that, for any edge $e$,
%
\begin{equation}
\label{eq:QBK} \bbP\bigl[A_2^e(n)\bigr]\le
\frac{\cBK}{(\log n)^{d/2}}.
\end{equation}
\end{theorem}

As we have already mentioned a quantitative Burton--Keane argument leading
to \eqref{eq:QBK} for Bernoulli percolation-type models was developed
in \cite{ChS13}.
In the case of Bernoulli site percolation, polynomial order upper bounds
on $\bbP[A_2^e(n)]$ were derived in the recent paper~\cite{Cerf13}
via a
clever refinement of techniques introduced by~\cite{AizKesNew87} and
\cite{GanGriRus88}.

\subsection{Continuity of percolation probabilities away from critical points}
Consider a one-parametric family $\{\bbP_\alpha\}_{\alpha\in
(a,b)}$ of
bond or site strong-$\mathsf{FKG}$ percolation
models on $\bbZ^d$. Define percolation probabilities
%
\begin{equation}
\label{eq:theta-alpha} \theta(\alpha) \stackrel{\mathrm{def}} {=}\bbP_\alpha(0
\leftrightarrow\infty).
\end{equation}
Assume that the measures $\bbP_\alpha$ satisfy the finite energy condition
($\mathsf{FE}$) uniformly
over compact intervals of $(a, b )$, and assume that $\theta>0$ on $(a,b)$.
At last, assume that $\alpha\mapsto\bbP_\alpha$ is increasing (in the
$\mathsf{FKG}$-sense), that
is, assume that $\bbP_\alpha$ is stochastically dominated by $\bbP
_\beta$
whenever $a<\alpha\leq\beta<b$.
We shall say that $\alpha\mapsto\bbP_\alpha$ is continuous at
$\alpha_0\in
(a,b)$ if the map $\alpha\mapsto\bbP_\alpha(f )$ is continuous at
$\alpha_0$ for any local function $f$.

\begin{theorem}
\label{thm:cont}
Under the above conditions: $\alpha\mapsto\theta(\alpha)$ cannot have
jumps at continuity points of $\alpha\mapsto\bbP_\alpha$.
\end{theorem}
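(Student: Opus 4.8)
The plan is to establish right- and left-continuity of $\theta$ separately at a continuity point $\alpha_0$ of $\alpha\mapsto\bbP_\alpha$. Right-continuity is essentially free: writing $\{0\leftrightarrow\infty\}=\bigcap_{n\ge1}\{0\leftrightarrow\partial\Lambda_n\}$, where $\Lambda_n$ is the ball of radius $n$ around $0$, we have $\theta=\inf_n g_n$ with $g_n(\alpha):=\bbP_\alpha[0\leftrightarrow\partial\Lambda_n]$. Each $g_n$ is non-decreasing (the event is increasing and $\alpha\mapsto\bbP_\alpha$ is stochastically increasing) and continuous at $\alpha_0$ (the event is local and $\alpha_0$ a continuity point), so $\theta$ is non-decreasing and, being an infimum of functions continuous at $\alpha_0$, is upper semi-continuous at $\alpha_0$; combined with monotonicity this gives $\lim_{\alpha\downarrow\alpha_0}\theta(\alpha)=\theta(\alpha_0)$. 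I also record for later use that, since $(\mathsf{FE})$ holds uniformly on a compact neighbourhood $J$ of $\alpha_0$, the constant $\cBK$ of Theorem~\ref{prop:quantitative BK} may be chosen uniformly over $J$, and that, because $\theta>0$, every $\bbP_\alpha$ with $\alpha\in J$ has $\bbP_\alpha$-a.s.\ a unique infinite cluster (Burton--Keane under $(\mathsf{FE})$ and $(\mathsf{TI})$; equivalently, $\bbP_\alpha[A_2^e(n)]\to0$ already rules out two disjoint infinite clusters through a common edge).

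It remains to exclude a downward jump from the left, which I would do by contradiction. Assume $\delta:=\theta(\alpha_0)-\lim_{\alpha\uparrow\alpha_0}\theta(\alpha)>0$; by monotonicity, $\theta(\alpha_0)-\theta(\alpha)\ge\delta$ for all $\alpha<\alpha_0$. Fix $\alpha\in J$, $\alpha<\alpha_0$, and let $\mathbf P$ be the monotone (Strassen) coupling of $\omega^{-}\sim\bbP_\alpha$ and $\omega^{+}\sim\bbP_{\alpha_0}$ with $\omega^{-}\preceq\omega^{+}$. Two features of $\mathbf P$ are central. First, for every edge $e$,
\[
\mathbf P\bigl[\omega^{-}_e=0,\ \omega^{+}_e=1\bigr]=\bbP_{\alpha_0}[\omega_e=1]-\bbP_\alpha[\omega_e=1]=:p(\alpha),
\]
which by $(\mathsf{TI})$ is independent of $e$ and, since $\{\omega_e=1\}$ is local, tends to $0$ as $\alpha\uparrow\alpha_0$. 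Second, $\mathbf P[\mathcal E_\alpha]=\theta(\alpha_0)-\theta(\alpha)\ge\delta$, where $\mathcal E_\alpha:=\{0\leftrightarrow\infty\text{ in }\omega^{+}\}\setminus\{0\leftrightarrow\infty\text{ in }\omega^{-}\}$. On $\mathcal E_\alpha$ the cluster of $0$ in $\omega^{-}$ is finite while in $\omega^{+}$ it is the unique infinite cluster, which by uniqueness and $\omega^{-}\preceq\omega^{+}$ contains the unique infinite cluster of $\omega^{-}$; hence every self-avoiding $\omega^{+}$-open path from $0$ to the infinite cluster of $\omega^{-}$ traverses at least one ``new'' edge (one with $\omega^{-}_e=0$, $\omega^{+}_e=1$).

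The heart of the proof — and the step I expect to be the main obstacle — is to convert this picture into a four-arm estimate. Along such an escape path one isolates a new edge $e$ flanked by two disjoint long open arms of $\omega^{+}$: one running back through the finite $\omega^{-}$-cluster of $0$, the other out through the infinite cluster of $\omega^{-}$, completing an arm to a prescribed length by a finite-energy corridor when necessary. The output is that, with $\mathbf P$-probability bounded below in terms of $\delta$, some edge $e$ satisfies $A_2^{e}(m)$ in $\omega^{+}$ at the chosen scale $m$. The delicate points are pinning down the location and scale of this four-arm configuration, keeping the two arms genuinely disjoint so that it really is $A_2^e(m)$ rather than a single connected cluster, and controlling the cost of the finite-energy corridor.

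Finally one confronts this lower bound with the quantitative Burton--Keane estimate. Summing $\mathbf P[\,\omega^{-}_e=0,\ \omega^{+}_e=1,\ A_2^{e}(m)\text{ in }\omega^{+}\,]$ over the candidate edges $e$ and combining (i) $\mathbf P[\omega^{-}_e=0,\omega^{+}_e=1]=p(\alpha)$ for every $e$, (ii) $\bbP_{\alpha_0}[A_2^{e}(m)]\le\cBK/(\log m)^{d/2}$ from Theorem~\ref{prop:quantitative BK}, and (iii) the $\ell^2$-pivotal inequality~\eqref{eq:BK-Var} applied to the increasing event $\{0\leftrightarrow\partial\Lambda_m\}$, so that $\sum_e\bbP_{\alpha_0}\bigl[\mathrm{Piv}_e(\{0\leftrightarrow\partial\Lambda_m\})\bigr]^2\le(4\cP)^{-1}$ — the feature that turns a crude union bound over edges into one carrying an extra gain — one arrives at an estimate in which the $\delta$-dependent lower bound is dominated by a quantity carrying the factor $p(\alpha)$. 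The order of the limits ($\alpha\uparrow\alpha_0$ against the choice of $m$) must be tracked carefully; done correctly, the right-hand side can be driven to $0$ while the left-hand side stays positive, which is the contradiction. Hence $\delta=0$, $\theta$ is left-continuous at $\alpha_0$, and the theorem follows.
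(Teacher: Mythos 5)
Your proposal takes a genuinely different route from the paper, and it has a gap at the crucial step. The paper's proof makes no use of a coupling at all and handles both one-sided limits at once: it writes, for $n\ll N$,
\[
\theta(\alpha)=\bbP_\alpha\lb 0\leftrightarrow\partial\Lambda_N\rb
-\bbP_\alpha\lb 0\leftrightarrow\partial\Lambda_N;\,0\not\leftrightarrow\infty;\,\partial\Lambda_n\leftrightarrow\infty\rb
-\bbP_\alpha\lb 0\leftrightarrow\partial\Lambda_N;\,\partial\Lambda_n\not\leftrightarrow\infty\rb,
\]
notes that the middle term forces two disjoint crossings of $A_{n,N}$ and is therefore controlled by the quantitative Burton--Keane bound \eqref{eq:QBK} (with $N$ chosen huge compared with $n$, using the uniformity of $(\mathsf{FE})$ on the compact interval), that the third term is small by $\theta>0$ and stochastic monotonicity, and that the first term is a \emph{local} event, hence continuous at $\alpha_0$. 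So $\theta$ is a uniform-in-$\alpha$ small perturbation of a function continuous at $\alpha_0$, and continuity follows in one stroke.

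Your coupling strategy is in the spirit of Aizenman--Kesten--Newman, but the pivotal step does not go through as sketched. On $\calE_\alpha$ you argue that some ``new'' edge $e$ on a $\omega^+$-open escape path from $0$ to the infinite $\omega^-$-cluster must carry the event $A_2^e(m)$ in $\omega^+$. This is false in general: $A_2^e(m)$ requires the two endpoints of $e$ to lie in \emph{disjoint} clusters of $\omega^+$ with $e$ removed, but if several boundary edges of the finite $\omega^-$-cluster $\calC_0^-$ become open in $\omega^+$ (which is generic), the two arms reconnect through one of the other new edges and the endpoints of $e$ end up in the same $\omega^+\setminus\{e\}$-cluster. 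You flag exactly this point as the ``main obstacle,'' and indeed it is not a detail to be filled in but a genuine hole: the escape-path picture simply does not localize a four-arm event at a single bond. A second problem is that the new edge lives on $\partial\calC_0^-$, which can be arbitrarily far from $0$, so there is no fixed finite set of edges over which to run your sum-over-$e$ argument. And the final paragraph, where the lower bound $\delta$ is supposed to be squeezed against $p(\alpha)$ using \eqref{eq:QBK} and \eqref{eq:BK-Var}, is left too vague to assess; the limit exchange you warn about is precisely where such a scheme usually collapses. The paper's decomposition avoids all of this by never trying to pin the ``two large disjoint clusters'' event to a single edge in the random configuration, relying instead on the already-proved bound \eqref{eq:QBK}.
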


In the case of Bernoulli percolation, continuity comes for free and
Theorem~\ref{thm:cont} implies continuity of percolation probabilities away
from critical points, as it was originally proved in \cite
{AizKesNew87}. In
the case of FK-percolation for the Ising model on $\bbZ^d$, proving continuity
of measures seems to be on the same level of difficulty as proving continuity
of percolation probabilities \cite{Bod2006}. On the other hand, in view
of \cite{Bod2006} and \cite{AD-CS2014}, Theorem~\ref{thm:cont} does
imply continuity of the site $+$-spin percolation away from critical inverse
temperature for the latter.

\subsection{Spanning clusters and polynomial decay}
Proposition~\ref{prop:4 arm torus} and Theorem~\ref{prop:quantitative
BK} are
based on \eqref{eq:main}. Yet, \eqref{eq:BK-Var} provides us with additional
degrees of freedom: one can try various model-dependent monotone
functions $f$.

Let $\bbP$ be a bond percolation measure on $\bbZ^d$. Consider the boxes
$\Lambda_k \stackrel{\mathrm{def}}{=}[-k,\ldots,k]^d$ and the
annuli $A_{m,n} \stackrel{\mathrm{def}}{=}
\Lambda_n\setminus
\Lambda_m$ for $0 <m <n$. Let $N= N_{m,n}$ be the number of distinct clusters
of $\partial\Lambda_n$ \emph{crossing} $A_{m,n}$ in the \emph
{restriction} of
the
percolation configuration to the bonds of $\Lambda_n$. In the sequel,
we shall
use $\eta$ for a percolation configuration on $\bbZ^d\setminus
\Lambda_m$,
$\omega$ for a percolation configuration on $\Lambda_m$, and $\eta
\times\omega$
for the configuration
obtained by merging the two previous configurations. For a given $\eta
$, the
function $\omega\mapsto N (\eta\times\omega)\stackrel{\mathrm
{def}}{=}N^\eta(\omega
)$ is
decreasing. Hence, \eqref{eq:BK-Var} implies that
%
\begin{equation}
\label{eq:N-bound} \Var\bigl(N^\eta(\omega) | \eta\bigr) \geq\cP \sum
_{e\in\calE_{\Lambda_m}} \bigl(\bbE\bigl(\nabla_e
N^\eta(\omega) |\eta\bigr)\bigr)^2.
\end{equation}
Above $\calE_{\Lambda_m}$ is the set of nearest neighbor bonds of
$\Lambda_m$.
Note that, for any $e\in\calE_{\Lambda_m}$,
\[
- \nabla_e N^\eta_{m,n} (\omega) \geq
\1_{A_2^e (2n )} (\eta\times \omega).
\]
Therefore, we infer from \eqref{eq:N-bound} the following corollary.

\begin{theorem}
\label{cor:N-bound-cor}
Consider a percolation model on $\bbZ^d$ satisfying $(\mathsf{FE})$,
$(\mathsf{FKG})$ and $(\mathsf{TI})$. Then, for any edge $e$,
%
\begin{eqnarray}
\label{eq:N-bound-cor} \bbP\bigl(A_2^e (2n) \bigr)&\leq& \sqrt
{ \frac{1}{\cP(2m)^d} \bbE\bigl( \Var\bigl(N^\eta_{m,n}
(\omega) | \eta\bigr)\bigr)}
\nonumber
\\[-8pt]
\\[-8pt]
\nonumber
&\leq& \sqrt{ \frac{1}{\cP(2m)^d} \Var(N_{m,n})},
\end{eqnarray}
for any $0 < m< n$.
\end{theorem}

Of course, \eqref{eq:N-bound-cor} is useful only when one is able to control
the number of crossing clusters of $A_{m.n}$, specifically $\bbE(
\Var(N^\eta_{m,n} (\omega)  | \eta))$. This requires
work: a
trivial upper bound of order $m^{2(d-1)}$ gives nothing even in two dimensions.
Settling this in any dimension would be a feat even in the case of Bernoulli
percolation; see \cite{Aiz97}. For the moment, it is not clear to us
that a
nice closed form bound can be obtained in the full generality suggested by
$(\mathsf{FE})$, $(\mathsf{FKG})$ and $(\mathsf{TI})$, even if one requires
ergodicity instead of just translation invariance.

In the case of Bernoulli site percolation, the following bound was derived
using very different methods based on independence (see \cite{Cerf13}):
%
\begin{equation}
\label{eq:Cerf} \bbP\bigl(A_2^e (2n) \bigr)\leq
\frac{c\log n}{ n^{d/2}} \bbE( \sqrt{N_{n,
2n}}).
\end{equation}
Unlike \eqref{eq:N-bound-cor}, \eqref{eq:Cerf} always gives a nontrivial
polynomial decay, even
if the roughest possible bound $N_{n, 2n} \leq cN^{d-1}$ is used.

\subsection{Four-arm event for critical planar random-cluster models with
\texorpdfstring{$q\ge1$}{q>=1}}
Using very recent results of \cite{Dum13,DumSidTas13} for the
random-cluster model on $\bbZ^2$, the distribution of the number of crossing
clusters can be controlled, and the upper bound \eqref{eq:N-bound-cor} implies
the following refinement of Theorem~\ref{prop:quantitative BK}, which
is of the
same order as the bound of Proposition~\ref{prop:4 arm torus}.

\begin{theorem}\label{prop:4 arm}
Let $d=2$, $q\in[1,4]$, there exists $c_{\mathsf A_2}=c_{\mathsf A_2}(p,q)>0$
such that, for any edge $e$ and every $n\ge1$,
\[
\phi_{p,q,\bbZ^2}\bigl[A_2^e(n)\bigr]\le
\frac{c_{\mathsf A_2}}{n},
\]
where $\phi_{p,q,\bbZ^2}$ is the unique infinite volume
random-cluster measure with edge-weight $p$ and cluster-weight $q$ (see
Section~\ref{sec:RSW} for a precise definition).
\end{theorem}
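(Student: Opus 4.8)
The plan is to apply Theorem~\ref{cor:N-bound-cor} with a careful choice of scales $m<n$ and to feed it the best available control on the fluctuations of the number of crossing clusters $N_{m,n}$ for the planar random-cluster model at $q\in[1,4]$. Concretely, I would fix a small multiplicative scale $m = \lfloor \varepsilon n\rfloor$ (or work along dyadic scales $m=n/2$) and estimate $\Var(N_{m,n})$ using the crossing/RSW technology available for $\phi_{p,q,\bbZ^2}$ when $q\in[1,4]$; the key external inputs are the RSW-type estimates and the quasi-multiplicativity of arm events from~\cite{Dum13,DumSidTas13} (and the references there), which in the plane give polynomial control on the one-arm and four-arm exponents and, crucially, allow one to bound the expected number of crossing clusters of an annulus $A_{m,n}$ by a constant (not growing with $n/m$). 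Indeed, at criticality in two dimensions the number of disjoint crossings of a fixed-modulus annulus has exponential tails uniformly in $n$, so $\bbE[N_{m,n}]$ and $\Var(N_{m,n})$ are both bounded by a constant $C=C(p,q)$ once $n/m$ is bounded.

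The heart of the argument is then purely arithmetic: plugging $\Var(N_{m,n})\le C$ and $m \asymp n$ into~\eqref{eq:N-bound-cor} yields
\be{eq:plan-plug}
\bbP\lb A_2^e(2n)\rb \le \sqrt{\frac{1}{\cP (2m)^d}\,\Var(N_{m,n})}
\le \sqrt{\frac{C}{\cP\,(2m)^2}} \le \frac{c_{\mathsf A_2}}{n},
\ee
using $d=2$ and $m\asymp n$. Replacing $2n$ by $n$ and absorbing constants gives the claimed bound $\phi_{p,q,\bbZ^2}[A_2^e(n)]\le c_{\mathsf A_2}/n$ for all $n\ge1$ (small $n$ being handled trivially by adjusting the constant). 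Note that the translation invariance $(\mathsf{TI})$, finite energy $(\mathsf{FE})$ for $q>0$, and $(\mathsf{FKG})$ for $q\ge1$ are all satisfied by $\phi_{p,q,\bbZ^2}$, so Theorem~\ref{cor:N-bound-cor} applies; one should also recall that for $q\in[1,4]$ at the self-dual point the infinite-volume measure is unique, which is what makes the statement about ``the unique infinite volume random-cluster measure'' meaningful.

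The main obstacle is establishing that $\Var(N_{m,n})$ (equivalently $\bbE[\Var(N^\eta_{m,n}(\omega)\mid\eta)]$) is bounded by a constant uniformly in $n$ when $m\asymp n$; this is exactly the point flagged in the paragraph after Theorem~\ref{cor:N-bound-cor} as requiring ``work''. In two dimensions for $q\in[1,4]$ this follows from the box-crossing (RSW) estimates of~\cite{Dum13,DumSidTas13}: a standard argument shows that if there were $k$ disjoint clusters crossing the annulus $A_{m,n}$ then, by a multi-arm separation and gluing construction, one can extract $k$ disjoint crossings of a sub-annulus of bounded modulus, an event of probability at most $e^{-ck}$ uniformly in $n$ (this uses that crossing probabilities of bounded-modulus annuli are bounded away from $1$, together with the FKG-based independence of well-separated crossing attempts). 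Hence $N_{m,n}$ has exponential tails and in particular bounded variance. A secondary subtlety is that $N^\eta_{m,n}$ counts clusters of $\partial\Lambda_n$ in the restriction to $\Lambda_n$, i.e.\ with free-type boundary influence from the outside configuration $\eta$; but the monotonicity in $\eta$ together with $(\mathsf{FKG})$ and the planar RSW bounds still give the same uniform exponential tail after integrating over $\eta$, so no genuinely new input beyond~\cite{Dum13,DumSidTas13} is needed.
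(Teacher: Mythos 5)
Your plan matches the paper's second proof of this theorem (the one headed ``Proof using bounds on $\bbE_{p_c,q,\bbZ^2}\lb N_{m,n}^2\rb$ and \eqref{eq:N-bound-cor}''): the paper also takes $m\asymp n$ (specifically $n=5m$), proves via RSW that $N_{m,5m}$ has an exponential tail uniformly in $m$ (Lemma~\ref{lem:Cl-size}, by conditioning on the leftmost dual path separating the first $k-1$ crossing clusters and using self-duality to produce a blocking dual crossing with probability $\geq\cRSW$), and then plugs the resulting bound on the second moment into \eqref{eq:N-bound-cor}. Two small points you should tidy up: the statement is for all $p$, and the off-critical cases $p\neq p_c$ must be dispatched (as the paper does in two lines) via exponential decay and planar duality before invoking the RSW machinery, which is only available at $p_c$; and quasi-multiplicativity of arm events is not actually needed here — that ingredient belongs to the paper's \emph{first} proof (via Proposition~\ref{prop:4 arm torus} and the mixing estimate), whereas the tail bound on $N_{m,n}$ only uses box-crossing estimates and self-duality.
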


The proof is easy whenever $p\neq p_c$.
In the critical case $p=p_c$, the proof is based on
Russo--Seymour--Welsh (RSW)
bounds obtained in \cite{Dum13,DumSidTas13}. We give two arguments:
one is
based on the implied mixing properties of $\phi_{p,q,\bbZ^2}$ and on a
subsequent reduction to Proposition~\ref{prop:4 arm torus}. The second
directly relies on RSW bounds to check that one can fix $\varepsilon>0$
such that
$\{\phi_{p,q,\bbZ^2} (N_{\varepsilon n, n}^2)\}$ is a
bounded sequence.
Then \eqref{eq:N-bound} applies.

Note that the phase transition is expected to be discontinuous for
$q>4$ (see
the discussion in Section~\ref{sec:RSW}) and the probability of
$A_2(n)$ should
decay exponentially fast at every $p$.

\section{Proof of Theorem \texorpdfstring{\protect\ref{thm:BK-Var}}{1.1}}\label{sec:2}
We shall prove Theorem~\ref{thm:BK-Var} with $\cP= \frac{\cFE{}^3}{
(2-\cFE)^2}$.

From now on in this section, we fix a finite set $\Lambda$. Consider a
percolation model on $\Omega\stackrel{\mathrm{def}}{=}\{0,1\}
^\Lambda$ satisfying
$(\mathsf{FE})$
and
$(\mathsf{FKG})$ and let $\bbP$ be the law of the random configuration
$\omega$.
Furthermore, for $I\subset\Lambda$, we define
$\omega_{I}\stackrel{\mathrm{def}}{=} \{\omega_i: i\in I
 \}$
and $\omega^{I}\stackrel{\mathrm{def}}{=} \{\omega_i: i\notin
I  \}$. To keep notation
compatible, we set $\omega_i$ and $\omega^i$ when $I=\{i\}$.

Recall that $\omega^i\times1$ and $\omega^i\times0$ denote the
configurations
obtained from $\omega$ by setting the value of $\omega_i$ to 1 and 0,
respectively.\vspace*{6pt}

\begin{center}
\emph{To lighten the notation, we write $p=\cFE/2$ for the rest of
this section.}
\end{center}

\subsection{A representation of fields satisfying $(\mathsf{FE})$ and
$(\mathsf{FKG})$}
In order to prove Theorem~\ref{thm:BK-Var}, we introduce an auxiliary
Bernoulli field $\sigma\in\{0,1\}^\Lambda$ and utilize the projection
method of \cite{ChS13}
with respect to $\sigma$-algebras generated by this auxiliary field. The
efficiency of
such approach hinges on the fact that $\sigma_i$'s happen to be
negatively correlated in the sense specified in $\mathsf{P3}$ of
Theorem~\ref{thm:uZ} below.

\begin{definition}
Consider a probability space $(\widehat\Omega,\widehat\bbP)$ containing
$(\Omega,\bbP)$ and an additional field $\sigma\in\{0,1\}^\Lambda$ which,
conditionally on $\omega\in\Omega$, has independent entries
satisfying, for
every $i\in\Lambda$,
%
\begin{equation}
\label{eq:defZ} \widehat\bbP(\sigma_i=1|\omega)=\frac{p\cdot\omega_i}{\bbP
(\omega_i=1|\omega^i)} .
\end{equation}
\end{definition}

Note that by our choice of $p$, which is adjusted to the finite energy
property~\eqref{eq:FE}, the
right-hand side of \eqref{eq:defZ} always belongs to $[0,1]$.

We claim that $\sigma$ enjoys the following set of properties.

\begin{theorem}
\label{thm:uZ}
Let $\omega$ be a field satisfying $(\mathsf{FE})$ and $(\mathsf
{FKG})$. Then:
\begin{longlist}[$\mathsf{P1}$]
\item[$\mathsf{P1}$] For each $i\in\Lambda$, if $\sigma_i=1$, then
$\omega_i=1$.
\item[$\mathsf{P2}$] For any $i\in\Lambda$ and any nondecreasing function
$f:\Omega\rightarrow\bbR$, the conditional expectation $\widehat
{\bbE}(f
(\omega
)|\sigma_i)$ is also nondecreasing.
\item[$\mathsf{P3}$] For any $i\in\Lambda$ and for any nondecreasing
functions
$f:\{0,1\}^{\Lambda\setminus\{i\}}\rightarrow\bbR$ and
$g:\{0,1\}\rightarrow\bbR$,
%
\begin{equation}
\label{eq:NegAs} \widehat{\bbE} \bigl[ f \bigl(\sigma^i \bigr)g(
\sigma_i) \bigr] \leq \widehat{\bbE} \bigl[f\bigl(\sigma^i
\bigr) \bigr]\widehat{\bbE} \bigl[g(\sigma _i) \bigr].
\end{equation}
\item[$\mathsf{P4}$] The family $\{\sigma_i-p:i\in\Lambda\}$ is
free in
$\bbL^2(\widehat\Omega,\widehat\bbP)$.
\end{longlist}
\end{theorem}

Property $\mathsf{P3}$ provides a form of negative association.
It is weaker than the usual form of negative association [which
corresponds to the analogue of \eqref{eq:NegAs} with $i$ and
$\Lambda\setminus\{i\}$ replaced by arbitrary disjoint subsets
$A,B\subset\Lambda$], but stronger than other related notions, such
as totally
negative dependence (see \cite{Daly} for this and other forms of negative
dependence).

\begin{pf*}{Proof of Theorem \ref{thm:uZ}} \emph{Property} $\mathsf{P1}$.
The first property follows directly from the definition of $\sigma$.

\emph{Property $\mathsf{P2}$.} Let us first prove that, for each
$i\in\Lambda$, $\sigma_i$ is a Bernoulli random variable of
parameter $p$,
independent of $\omega^i$.
This follows from \eqref{eq:defZ} and the following computation:
\[
\widehat{\bbP}\bigl(\sigma_i = 1;\omega^i\bigr) =
\widehat{\bbP}\bigl(\sigma_i = 1;\omega_i =1;
\omega^i\bigr) = \frac{p}{\bbP({\omega_i =1}|\omega^i)} \bbP\bigl( \omega_i
=1; \omega^i\bigr)= p \bbP\bigl(\omega^i\bigr).
\]
Hence, $\sigma_i$ is indeed a Bernoulli random variable of parameter
$p$ (also
$\sigma_i$ and $\omega^i$ are independent). Now, let us simplify the notation
by setting $f_i (\sigma_i ) \stackrel{\mathrm{def}}{=}\widehat{\bbE
}(f (\omega)|\sigma_i
)$.
Then, using $\widehat{\bbP}(\sigma_i=1) = p$ in the second equality below,
%
\begin{eqnarray}
\label{eq:a} (1-p) \bigl(f_i (1)-f_i(0)\bigr) &
\stackrel{\mathrm{def}} {=}& (1-p) \biggl[\frac{\widehat{\bbE}(f(\omega)\1_{\sigma
_i=1})}{\widehat{\bbP}(\sigma_i=1)} -\frac{\widehat{\bbE}(f (\omega)\1_{\sigma_i =0})}{\widehat{\bbP
}(\sigma
_i=0)}
\biggr]
\nonumber
\\
&=&\biggl(\frac{1}p-1\biggr)\widehat{\bbE}\bigl(f (\omega)
\1_{\sigma_i
=1}\bigr)-\widehat{\bbE} \bigl(f (\omega)\1_{\sigma_i =0}\bigr)
\nonumber
\\
&\stackrel{\mathrm{(\mathsf{P1})}} {=}&\frac{1}p\widehat{\bbE}\bigl(f (
\omega )\1_{\omega_i=1}\1_{\sigma_i =1}\bigr)-\bbE\bigl(f (\omega)\bigr)
\\
&\stackrel{\scriptsize{\eqref{eq:defZ}}} {=} &\frac{1}p\bbE \biggl( f (\omega )
\1_{\omega_i=1}\frac{p}{\bbP(\omega_i =1|
\omega^i)} \biggr)-\bbE\bigl(f (\omega)\bigr)
\nonumber
\\
&=& \bbE\bigl(f \bigl(\omega^i \times1\bigr)\bigr)-\bbE
\bigl(f (\omega )\bigr)\stackrel{\mathrm{(\mathsf{FKG})}} {\ge}0.\nonumber
\end{eqnarray}

\emph{Property $\mathsf{P3}$.} We wish to prove the negative
association formula \eqref{eq:NegAs}. Since $g$ is a nondecreasing
function of
only one site, we only need to treat the case $g=\mathrm{id}$ (any
nondecreasing
function of one site is of the form $\alpha \mathrm{id}+\beta$ with
$\alpha\ge0$). For $\omega_I\in\{0,1\}^I$ and
$\omega^I\in\{0,1\}^{\Lambda\setminus I}$, let $\omega_I\times
\omega^I$ be the
configuration in $\Omega$ coinciding with $\omega_I$ on $I$ and
$\omega^I$ on
$\Lambda\setminus I$.
\begin{claim*}
For any subset $I\subset\Lambda$, the following happens:
If $\omega_I\succ\widetilde\omega_I$, then, for
any $\omega^I$ and for
any nondecreasing function
$f:\{0,1\}^{\Lambda\setminus I}\rightarrow\bbR$,
%
\begin{equation}
\label{eq:StDomZ} \widehat{\bbE}\bigl(f\bigl( \sigma^I\bigr) |
\omega_I\times\omega^I \bigr)\leq \widehat{\bbE}\bigl(f
\bigl( \sigma^I \bigr) |\widetilde\omega_I\times
\omega^I \bigr).
\end{equation}
\end{claim*}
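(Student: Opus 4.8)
The plan is to reduce \eqref{eq:StDomZ} to a stochastic-domination statement between two product measures on $\{0,1\}^{\Lambda\setminus I}$, using that, conditionally on $\omega$, the coordinates $\{\sigma_j:j\notin I\}$ are independent Bernoulli variables whose parameters can be compared directly.

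First I would record that, by \eqref{eq:defZ}, conditionally on a configuration $\omega$ the field $\sigma^I$ has law $\bigotimes_{j\notin I}\mathrm{Ber}\bigl(q_j(\omega)\bigr)$, where $q_j(\omega)\defby p\,\omega_j/\bbP(\omega_j=1\given\omega^j)\in[0,1]$ (as observed right after \eqref{eq:defZ}). Hence $\hE\lb f(\sigma^I)\given\omega\rb$ is the integral of $f$ against $\bigotimes_{j\notin I}\mathrm{Ber}(q_j(\omega))$.

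The heart of the matter is a monotonicity check on the parameters $q_j$. Fix $j\notin I$. Since $I\subset\Lambda\setminus\{j\}$, the $I$-coordinates of a configuration form part of its restriction to $\Lambda\setminus\{j\}$, while the single coordinate $\omega_j$ is already prescribed by $\omega^I$, and so takes the same value in $\omega_I\times\omega^I$ and in $\widetilde\omega_I\times\omega^I$. If this common value is $0$ then $q_j=0$ in both configurations; if it is $1$, then $q_j=p/\bbP(\omega_j=1\given\omega^j)$, and the restriction of $\omega_I\times\omega^I$ to $\Lambda\setminus\{j\}$ dominates that of $\widetilde\omega_I\times\omega^I$ in the partial order $\prec$ (because $\omega_I\succ\widetilde\omega_I$ and the two agree off $I\cup\{j\}$). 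Thus $(\mathsf{FKG})$, in the form \eqref{eq:strongFKG} applied with $i=j$, yields
\[
\bbP\bigl(\omega_j=1\bigm|(\omega_I\times\omega^I)^j\bigr)\ \geq\ \bbP\bigl(\omega_j=1\bigm|(\widetilde\omega_I\times\omega^I)^j\bigr),
\]
and therefore $q_j(\omega_I\times\omega^I)\leq q_j(\widetilde\omega_I\times\omega^I)$. In all cases this inequality holds for every $j\notin I$.

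Finally I would invoke the elementary fact that a product of Bernoulli laws $\bigotimes_{j\notin I}\mathrm{Ber}(a_j)$ is stochastically increasing in the parameters $(a_j)$ (coordinatewise coupling via independent uniform variables). Since $f$ is non-decreasing and $q_j(\omega_I\times\omega^I)\leq q_j(\widetilde\omega_I\times\omega^I)$ for all $j\notin I$, integrating $f$ against the two product measures identified in the second step gives $\hE\lb f(\sigma^I)\given\omega_I\times\omega^I\rb\leq\hE\lb f(\sigma^I)\given\widetilde\omega_I\times\omega^I\rb$, which is \eqref{eq:StDomZ}. I do not anticipate a genuine obstacle here; the one point that needs care is the direction of monotonicity — lowering the $I$-coordinates lowers each conditional probability $\bbP(\omega_j=1\given\omega^j)$, hence \emph{raises} the corresponding Bernoulli parameter $q_j$, which is exactly what places the $\widetilde\omega_I$ side on top.
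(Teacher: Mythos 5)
Your proof is correct and follows essentially the same route as the paper: condition on $\omega$ to see $\sigma^I$ as a product of Bernoulli variables, use $(\mathsf{FKG})$ coordinatewise to compare the parameters, and conclude by the standard stochastic monotonicity of product Bernoulli measures. The only (minor) difference is that you explicitly separate the cases $\omega_j=0$ and $\omega_j=1$, noting the factor $\omega_j$ in \eqref{eq:defZ}, whereas the paper compares the fractions $p/\bbP(\omega_j=1\given\cdot)$ directly and leaves implicit that when $\omega_j=0$ both parameters vanish — your version is marginally more careful on this point, but it is the same argument.
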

\begin{pf}
Under the conditional measure $\widehat\bbP(\cdot|\omega
)$, the
sequence $\sigma$ is simply a collection of independent Bernoulli random
variables with probabilities of success specified by \eqref{eq:defZ}.
By \eqref{eq:strongFKG},
\[
\frac{p}{\bbP ( \omega_i = 1 | (\omega_I\times\omega
^{I})^i  )} \leq \frac{p}{\bbP ( \omega_i = 1 | (\widetilde\omega_I\times
\omega^{I})^i
 )},
\]
for any $i\notin I$ and $\omega_I\succ\widetilde\omega_I$, a
fact which
implies
that the random variable $\sigma_I$ conditioned on $\omega_I\times
\omega^{I}$
is stochastically dominated by the random variable $\sigma_I$
conditioned on
$\widetilde\omega_I\times\omega^I$. The claim follows by definition of
stochastic domination.
\end{pf}
In particular, the claim yields that if $f$
is a nondecreasing function of $\sigma^i$, then for any $i$ and any
$\omega^i$,
%
\begin{equation}
\label{eq:StDom-i} \widehat{\bbE}\bigl(f \bigl(\sigma^i\bigr)|
\omega^i\times1\bigr)\leq \widehat{\bbE}\bigl(f \bigl(
\sigma^i\bigr)|\omega^i\bigr).
\end{equation}
As a result, we infer
\begin{eqnarray*}
\widehat{\bbE}\bigl(f \bigl(\sigma^i\bigr)\sigma_i\bigr)
& = &\widehat{\bbE}\bigl(f\bigl(\sigma^i\bigr)\1_{\sigma_i=1}
\1_{\omega_i=1}\bigr)
\\
&\stackrel{\scriptsize{\eqref{eq:defZ}}} {=} &\sum_{\omega^i } p
\widehat{\bbE}\bigl(f \bigl(\sigma^i\bigr)|\omega^i\times1
\bigr) \bbP\bigl( \omega^i\bigr)
\\
&\stackrel{\scriptsize{\eqref{eq:StDom-i}}} {\leq}& p \sum_{\omega^i }
\widehat{\bbE}\bigl(f \bigl(\sigma^i\bigr)|\omega^i\bigr)
\bbP\bigl( \omega^i\bigr)
\\
& = &p \widehat{\bbE}\bigl[f \bigl(\sigma^i\bigr)\bigr]\stackrel{
\mathrm{(\mathsf {P1})}} {=}\widehat{\bbE} [\sigma_i] \widehat{\bbE}
\bigl[f \bigl(\sigma^i\bigr)\bigr].
\end{eqnarray*}

\emph{Property $\mathsf{P4}$.} Assume that
$X\stackrel{\mathrm{def}}{=}\sum_{i\in\Lambda}\lambda_i(\sigma
_i-p)=0$. We deduce that
\[
0 =\widehat\bbE\bigl[X^2\bigr] =\widehat\bbE\bigl[\bigl(X-\widehat
\bbE[X|\omega]\bigr)^2\bigr] +\widehat\bbE\bigl[\widehat\bbE[X|
\omega]^2\bigr] \ge\widehat\bbE\bigl[\bigl(X-\widehat\bbE[X|\omega]
\bigr)^2\bigr].
\]
Now,
\[
X-\widehat\bbE[X|\omega] =\sum_{i\in\Lambda}
\lambda_i\bigl(\sigma_i-\widehat\bbE[\sigma
_i|\omega]\bigr)
\]
and, conditionally on $\omega$, the random variables
$\sigma_i-\widehat\bbE[\sigma_i|\omega]$ are independent and have
mean 0. We deduce that
\begin{eqnarray*}
\widehat\bbE\bigl[\bigl(X-\widehat\bbE[X|\omega]\bigr)^2\bigr] &=&
\widehat\bbE \bigl[\widehat\bbE\bigl[\bigl(X-\widehat\bbE[X|\omega ]
\bigr)^2|\omega\bigr] \bigr]
\\
&=&\widehat\bbE \biggl[\widehat\bbE \biggl[ \biggl(\sum
_{i\in\Lambda} \lambda_i\bigl(\sigma_i-\widehat
\bbE[\sigma_i|\omega]\bigr) \biggr)^2\Big | \omega \biggr]
\biggr]
\\
&=&\widehat\bbE \biggl[\sum_{i\in\Lambda}
\lambda_i^2\widehat\bbE \bigl[ \bigl(\sigma_i-
\widehat\bbE[\sigma_i|\omega]\bigr)^2 |\omega \bigr]
\biggr]
\\
&=&\sum_{i\in\Lambda}\lambda_i^2
\widehat\bbE\bigl[\bigl(\sigma_i-\widehat \bbE[ \sigma_i|
\omega]\bigr)^2\bigr].
\end{eqnarray*}
Now, $\widehat\bbE[\sigma_i|\omega]\le1/2$ by \eqref{eq:defZ}
and \eqref{eq:FE} and, therefore,
%
\begin{eqnarray}
\label{eq:important} \widehat\bbE\bigl[\bigl(\sigma_i-\widehat\bbE[
\sigma_i|\omega]\bigr)^2\bigr] &=& \bbE \bigl[\widehat\bbE[
\sigma_i|\omega]\bigl(1-\widehat\bbE[\sigma _i|\omega]
\bigr) \bigr]
\nonumber
\\[-8pt]
\\[-8pt]
\nonumber
&\ge& \frac{1}2\bbE\bigl[\widehat\bbE[\sigma_i|
\omega]\bigr]=\frac{p}2>0,
\end{eqnarray}
which implies that $\lambda_i=0$ for every $i\in\Lambda$.
\end{pf*}

\subsection{Proof of Theorem \texorpdfstring{\protect\ref{thm:BK-Var}}{1.1}}
\label{sub:BK-Var}
Recall our notation $f_i (\sigma_i ) \stackrel{\mathrm
{def}}{=}\widehat{\bbE}(f (\omega)|
\sigma_i
)$.
We may represent $f (\omega)$ as
\[
f=\sum_{i}\gamma_i f_i (
\sigma_i)+f^\perp,
\]
with $f^\perp$ orthogonal to the subspace of
$\bbL^2(\widehat\Omega,\widehat\bbP)$ spanned by $(f_i(\sigma
_i),i\in\Lambda)$.
Since $\widehat\bbE[f_i(\sigma_i)]=0$ and
$\widehat\bbE[\sigma_i]=p$ by $\mathsf{P1}$ of Theorem~\ref
{thm:uZ}, we deduce
that
%
\begin{equation}
\label{eq:formula f_i} f_i(\sigma_i)=\bigl(f_i(1)-f_i(0)
\bigr) (\sigma_i-p).
\end{equation}
Should the random variables $\sigma_i$ be independent, we would immediately
infer that $\gamma_i\equiv1$ and
\[
\Var\bigl(f(\omega)\bigr) \geq p (1-p) \sum_i
\bigl(f_i(1)-f_i(0)\bigr)^2,
\]
which, by \eqref{eq:a}, would be the end of the proof. This is
precisely the
computation done in the Bernoulli case in \cite{ChS13}. In our case, however,
the random variables $\sigma_i$ are dependent, and we need additional
information and more care in order to control both the coefficients
$\gamma_i$ and the cross-terms. It is precisely at this stage that negative
dependence, as stated in $\mathsf{P3}$ of Theorem~\ref{thm:uZ}, becomes
crucial.

Before proceeding with the proof of the theorem, let us formulate and prove
the following elementary lemma.

\begin{lemma}
\label{lem:G-Fact}
Let $(\calH, \langle\cdot, \cdot\rangle)$ be a finite dimensional
Hilbert space, and let $\{\mathsf f_1,\ldots,\mathsf f_n\}$ be a
normalized basis of
$\calH$, such that $ \langle\mathsf f_i, \mathsf f_j \rangle\leq0$
for every pair
$i\neq j$. Then, for any $\mathsf f=\sum_{i=1}^n\lambda_i\mathsf f_i$
such that
$\langle\mathsf f,\mathsf f_i\rangle\ge0$ for every $1\le i\le n$,
we have that
$\lambda_i\ge\langle\mathsf f, \mathsf f_i \rangle$ for every $1\le
i\le n$.
\end{lemma}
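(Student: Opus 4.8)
The plan is to prove Lemma~\ref{lem:G-Fact} by exploiting the Gram matrix of the basis $\{\sff_1,\dots,\sff_n\}$. Write $G = (G_{ij})$ with $G_{ij} = \langle \sff_i,\sff_j\rangle$, so that $G_{ii}=1$ and $G_{ij}\le 0$ for $i\neq j$. If $\sff = \sum_j \lambda_j \sff_j$ and we set $b_i \defby \langle \sff,\sff_i\rangle$, then $b = G\lambda$, i.e. $\lambda = G^{-1}b$, with the hypotheses saying exactly that $b_i\ge 0$ for all $i$. The claim $\lambda_i \ge b_i$ for all $i$ then amounts to showing that $(G^{-1} - \mathrm{Id})b$ has nonnegative entries whenever $b$ does; since $b$ ranges over all of the nonnegative orthant (as $G$ is invertible, being a Gram matrix of a basis), this is equivalent to showing that every entry of $G^{-1}-\mathrm{Id}$ is nonnegative, or a little more sharply, that $G^{-1}$ has nonnegative entries and nonnegative diagonal entries at least $1$.

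The key structural fact to invoke is that $G$ is a symmetric positive-definite matrix with nonpositive off-diagonal entries, i.e. a (symmetric) $M$-matrix — equivalently, a Stieltjes matrix. It is a standard fact that the inverse of such a matrix has all entries nonnegative; one clean way to see it is to write $G = \rho\,\mathrm{Id} - A$ where $\rho>0$ and $A$ has nonnegative entries (take $\rho = 1$ and $A = \mathrm{Id} - G$, whose off-diagonal entries are $-G_{ij}\ge 0$ and whose diagonal entries are $0$), and to check that the spectral radius of $A/\rho$ is strictly less than $1$ — this follows from positive-definiteness of $G = \rho(\mathrm{Id} - A/\rho)$, since all eigenvalues of $\mathrm{Id}-A/\rho$ are positive, and $A/\rho$ has nonnegative entries so its spectral radius is an eigenvalue by Perron--Frobenius. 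Then the Neumann series
\[
G^{-1} = \frac1\rho\sum_{k\ge 0}\Bigl(\frac{A}{\rho}\Bigr)^{k}
= \mathrm{Id} + \sum_{k\ge 1}A^{k}
\]
(using $\rho=1$) converges and is a sum of matrices with nonnegative entries. This simultaneously shows $G^{-1}$ has nonnegative entries and that $G^{-1} - \mathrm{Id} = \sum_{k\ge1}A^k$ has nonnegative entries, which is exactly what is needed: $\lambda - b = (G^{-1}-\mathrm{Id})b = \sum_{k\ge1}A^k b$ has nonnegative entries since $b$ does.

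I expect the main (mild) obstacle to be the verification that the Neumann series converges, i.e. that $\mathrm{spr}(A) < 1$, since the lemma as stated does not assume $G$ is positive definite outright — but it is automatically so, being the Gram matrix of a \emph{basis} (a linearly independent spanning set) of the Hilbert space $\calH$. So one should first record this: $\langle\cdot,\cdot\rangle$ being an inner product and the $\sff_i$ linearly independent forces $G \succ 0$. After that, everything is soft: Perron--Frobenius gives $\mathrm{spr}(A)$ as an eigenvalue of $A$, hence $1 - \mathrm{spr}(A)$ is an eigenvalue of $G = \mathrm{Id}-A$, hence positive, hence $\mathrm{spr}(A)<1$. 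Alternatively, if one wishes to avoid Perron--Frobenius entirely, one can argue directly by induction on $n$ using the cofactor/Cramer formula together with the fact that principal submatrices of $G$ are again Gram matrices of linearly independent families (hence positive definite) and inherit the sign pattern — but the Neumann-series route is cleaner and I would present that one, with the elementary induction mentioned only as a remark if space permits.
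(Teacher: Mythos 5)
Your proof is correct and takes a genuinely different route from the paper's. The paper argues geometrically: by induction on $n$, projecting onto the orthogonal complement of $\sff_1$, it first proves an \emph{obtuse cone property} --- that $\langle\sff,\sff_i\rangle\ge 0$ for all $i$ forces every coefficient $\lambda_j\ge 0$ --- after which the claimed bound is immediate from $\langle\sff,\sff_i\rangle=\lambda_i+\sum_{j\ne i}\lambda_j\langle\sff_j,\sff_i\rangle\le\lambda_i$. You instead recognize the Gram matrix $G$ as a symmetric Stieltjes ($M$-)matrix and read off $G^{-1}-\mathrm{Id}=\sum_{k\ge 1}A^k\ge 0$ entrywise from the Neumann series, giving $\lambda-b=(G^{-1}-\mathrm{Id})b\ge 0$ in one step (and recovering the paper's intermediate claim $\lambda\ge 0$ for free, since also $G^{-1}\ge 0$ entrywise). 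Your route makes the standard $M$-matrix structure explicit at the cost of the convergence check; that check is indeed easy, and here Perron--Frobenius can even be avoided because $A$ is symmetric and entrywise nonnegative: if $Av=\mu v$ with $\|v\|=1$, then $|\mu|=|v^\top Av|\le |v|^\top A|v|\le\mu_{\max}$, where $|v|$ denotes the entrywise absolute value, so $\mathrm{spr}(A)=\mu_{\max}<1$ by positive definiteness of $G=\mathrm{Id}-A$. Both arguments rest on the same underlying fact about inverses of Stieltjes matrices; the paper's induction is more self-contained, while yours is shorter once the $M$-matrix framing is granted.
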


Note that when the basis $\{\mathsf f_1,\ldots,\mathsf f_n\}$ is
orthogonal, we find that
$\lambda_i=\langle\mathsf f,\mathsf f_i\rangle$ for every $1\le i\le n$.
\begin{pf*}{Proof of Lemma \ref{lem:G-Fact}}
Lemma~\ref{lem:G-Fact} relies on the following transparent geometric
fact.

\textit{Obtuse cone property}.
Consider a positive
cone $\calC= \calC(\mathsf f_1, \ldots, \mathsf f_n )$
spanned by vectors $\mathsf f_i$. The cone is called obtuse if $
\langle\mathsf f_i,\mathsf f_j\rangle\le0$ for any $i\neq j$.
Then $\langle\mathsf f,\mathsf f_i\rangle\ge0$ for every $ i$
implies that $\mathsf f\in\calC$, that is, all the coefficients
$\lambda_j$ in the
decomposition $\mathsf f= \sum_j\lambda_j\mathsf f_j$ are
nonnegative. 

Taking scalar product with normalized vectors $\mathsf f_i$ yields
\[
\langle\mathsf f,\mathsf f_i\rangle= \lambda_i + \sum
_{j\neq i} \lambda_j \langle\mathsf
f_j,\mathsf f_i\rangle\leq\lambda_i
\]
and one then gets the conclusion of Lemma~\ref{lem:G-Fact}.

Perhaps the simplest way to prove the above obtuse cone property
is by induction on the cardinality of the basis. The two-dimensional
case is
straightforward.
Let us now consider the basis $\{\mathsf f_1,\ldots,\mathsf f_{n+1}\}$.
For $i\ge2$, let
%
\begin{equation}
\label{eq:Fi} \sfF_i \stackrel{\mathrm{def}} {=}\frac{\mathsf f_i-\langle\mathsf
f_i,\mathsf
f_1\rangle\mathsf f_1}{
1-\langle\mathsf f_1,\mathsf f_i\rangle^2}
\end{equation}
be the normalized projection of $\mathsf f_i$ on $\operatorname{vec}(\mathsf
f_1)^\perp$. Also,
write $\mathsf f=\langle\mathsf f,\mathsf f_1\rangle\mathsf f_1+\sfF
$ where
$\sfF\in\operatorname{vec}(\sfF_2,\ldots,\sfF_{n+1})$. We wish to apply
the induction
hypothesis with $\sfF$ and $\sfF_2,\ldots,\sfF_{n+1}$. For that,
simply observe
that, for $i\ne j$,
\[
\langle\sfF_i,\sfF_j\rangle = \frac{\langle\mathsf f_i,\mathsf f_j\rangle
-\langle\mathsf f_i,\mathsf f_1\rangle\langle\mathsf f_j,\mathsf
f_1\rangle}{
(1-\langle\mathsf f_1,\mathsf f_i\rangle^2)(1-\langle\mathsf
f_1,\mathsf f_j\rangle^2)}\le0
\]
(we used that $\langle\mathsf f_i,\mathsf f_j\rangle\le0$ for $i\ne
j$) and
%
\begin{equation}
\label{eq:easy} \langle \sfF,\sfF_i\rangle= \langle\mathsf f,
\sfF_i\rangle= \frac{\langle\mathsf f,\mathsf f_i\rangle-\langle\mathsf f_i,
\mathsf f_1\rangle\langle\mathsf f,\mathsf f_1\rangle}{1-\langle
\mathsf f_i,
\mathsf f_1\rangle^2}\ge\langle\mathsf f,\mathsf
f_i\rangle\ge0,
\end{equation}
where the first equality is due to the orthogonality of $\mathsf f_1$
and $\sfF_i$,
and the first inequality
to the fact that $\langle\mathsf f_i,\mathsf f_1\rangle\le0$ and
$\langle\mathsf f,\mathsf f_1\rangle\ge0$. Therefore, by the
induction assumption, $\sfF$
lies in the cone $\calC(\sfF_2, \ldots, \sfF_{n+1} )$. By \eqref
{eq:Fi}, all
$\sfF_i$'s lie in $\calC(\mathsf f_1, \ldots, \mathsf f_{n+1})$ and
hence $\sfF\in
\calC(\mathsf f_1, \ldots, \mathsf f_{n+1})$ as well. Since the coefficient
$\langle\mathsf f,\mathsf f_1\rangle$ in $\mathsf f=\langle\mathsf
f,\mathsf f_1\rangle\mathsf f_1+
\sfF$ is nonnegative, we are done.
\end{pf*}

\begin{pf*}{Proof of Theorem~\ref{thm:BK-Var}}
Consider a field $\omega\in\{0,1\}^\Lambda$ satisfying ($\mathsf
{FE}$) and
($\mathsf{FKG}$). We keep\vspace*{1pt} the notation from the previous section. In
particular, on the probability space $(\widehat\Omega,\widehat\bbP
)$, we
associate the field $\sigma$ to $\omega$.

Let $f$ be a square-integrable nondecreasing function of $\omega$. As before,
we set $f_i(\sigma_i)\stackrel{\mathrm{def}}{=}\widehat\bbE
[f(\omega)|\sigma_i]$.
Without loss of
generality, we assume that $\bbE[f(\omega)]=0$, and consequently,
$\widehat\bbE[f_i(\sigma_i)]=0$ for every $i\in\Lambda$.

Let $I$ be the set of indices $i$ satisfying $\bbE[f_i(\sigma_i)^2]>0$.
Consider $V=\operatorname{vec}(f_i(\sigma_i):i\in I)$ and write
\[
f = \sum_{i\in I}\gamma_if_i(
\sigma_i)+f^\perp,
\]
where $f^\perp\in V^\perp$. Properties $\mathsf{P2}$ and $\mathsf
{P3}$ of
Theorem~\ref{thm:uZ} show that $\langle f_i(\sigma_i), f_j(\sigma
_j)\rangle\le0$
for every $i\ne j$ in $I$.
Furthermore,
\[
\bigl\langle f(\omega),f_i(\sigma_i)\bigr\rangle =
\widehat\bbE\bigl[f(\omega)f_i(\sigma_i)\bigr] = \widehat
\bbE\bigl[f_i(\sigma_i)^2\bigr] \ge0.
\]
(The last equality is due to the definition of the conditional expectation.)
Last but not least, the family $\{f_i(\sigma_i):i\in I\}$ forms a
basis of
$V$. Indeed, this directly follows from \eqref{eq:formula f_i}.

Now, $f_i(1)-f_i(0)>0$ for $i\in I$, and the family $\{\sigma_i-p:i\in
I\}$ is
free thanks to $\mathsf{P4}$. We are therefore in position to apply the
previous lemma with $\mathsf f_i(\sigma_i)\stackrel{\mathrm
{def}}{=}f_i(\sigma_i)/\|
f_i(\sigma_i)\|$ in
order to obtain that
%
\begin{equation}
\label{eq:al} \gamma_i \ge \frac{\langle f(\omega),f_i(\sigma_i)\rangle}{\|f_i(\sigma_i)\|^2} = \frac{\widehat\bbE[f_i(\sigma_i)^2]}{\|f_i(\sigma_i)\|^2} =
1.
\end{equation}
We deduce that
\begin{eqnarray*}
\widehat\bbE\bigl[f(\omega)^2\bigr] &\ge& \widehat\bbE \biggl[
\biggl(\sum_{i\in I}\gamma_i f_i(
\sigma_i) \biggr)^2 \biggr]
\\
&= &\widehat\bbE \biggl[ \biggl(\sum_{i\in I}
\gamma_i\bigl(f_i(\sigma_i) -\widehat\bbE
\bigl[f_i(\sigma_i)|\omega\bigr]\bigr)
\biggr)^2 \biggr] +\widehat\bbE \biggl[ \biggl(\sum
_{i\in I}\gamma_i\widehat\bbE \bigl[f_i(
\sigma_i)|\omega\bigr] \biggr)^2 \biggr]
\\
&\ge& \widehat\bbE \biggl[ \biggl(\sum_{i\in I}
\gamma_i \bigl(f_i(\sigma_i)-\widehat\bbE
\bigl[f_i(\sigma_i)|\omega\bigr]\bigr)
\biggr)^2 \biggr]
\\
&=& \sum_{i\in I} \gamma_i^2
\widehat\bbE\bigl[\bigl(f_i(\sigma_i) -\widehat\bbE
\bigl[f_i(\sigma_i)|\omega\bigr]\bigr)^2\bigr]
\\
& \stackrel{\scriptsize{\eqref{eq:al}}} {\ge}& \sum_{i\in I} \widehat
\bbE\bigl[\bigl(f_i(\sigma_i) -\widehat\bbE
\bigl[f_i(\sigma_i)|\omega\bigr]\bigr)^2\bigr]
\\
&=& \sum_{i\in\Lambda} \widehat\bbE\bigl[
\bigl(f_i(\sigma_i)-\widehat\bbE\bigl[f_i(
\sigma_i)|\omega\bigr] \bigr)^2\bigr],
\end{eqnarray*}
where the second equality is due to the fact that, conditionally on
$\omega$,
the random variables
$ \{f_i(\sigma_i)-\widehat\bbE[f_i(\sigma_i)|\omega]:i\in I
 \}$ are
orthogonal (since the $\sigma_i-\widehat\bbE[\sigma_i|\omega]$ are),
and the last equality to the observation that, for $i\notin I$,
\[
0 \le \widehat\bbE\bigl[\bigl(f_i(\sigma_i)-\widehat\bbE
\bigl[f_i(\sigma_i)|\omega\bigr]\bigr)^2\bigr]
\le \widehat\bbE\bigl[f_i(\sigma_i)^2
\bigr]=0.
\]
We are now ready to conclude. Similarly to \eqref{eq:formula f_i}, we
find that (remember that we chose $p=\cFE/2$)
%
\begin{eqnarray}\label{eq:BK-Var-Xi}
&&\widehat\bbE \bigl[\bigl(f_i(\sigma_i)-\widehat\bbE
\bigl[f_i(\sigma _i)|\omega\bigr]\bigr)^2
\bigr] \nonumber\\
&&\qquad= \bigl(f_i(1)-f_i(0)\bigr)^2\widehat
\bbE \bigl[\bigl(\sigma_i-\widehat\bbE[\sigma _i|\omega]
\bigr)^2 \bigr]
\\
&&\qquad \stackrel{\scriptsize{\eqref{eq:important}}} {\ge}\frac{p}2\bigl(f_i(1)-f_i(0)
\bigr)^2
\nonumber
\\
&&\qquad \stackrel{\scriptsize{\eqref{eq:a}}} {=}\frac{p}{2(1-p)^2} \bigl(\bbE \bigl[f\bigl(
\omega^i\times1\bigr)\bigr] -\bbE\bigl[f(\omega)\bigr]
\bigr)^2\nonumber
\\
&&\qquad=\frac{p}{2(1-p)^2} \bigl(\bbE\bigl[\bigl(f\bigl(\omega^i\times1
\bigr)-f\bigl(\omega ^i\times0\bigr)\bigr)\1_{
\omega_i=0}\bigr]
\bigr)^2
\nonumber
\\
&&\qquad\stackrel{\scriptsize{\eqref{eq:FE}}} {\ge} \frac{2p^3}{(1-p)^2} \bigl(\bbE\bigl[ f\bigl(
\omega^i\times1\bigr)-f\bigl(\omega^i\times0\bigr)\bigr]
\bigr)^2 .\nonumber
\end{eqnarray}
Overall, we find that
\[
\Var\bigl(f(\omega)\bigr)=\widehat\bbE\bigl[f(\omega)^2\bigr] \ge
\frac{2p^3}{(1-p)^2}\sum_{i\in
\Lambda} \bigl(\bbE\bigl[f\bigl(
\omega^i\times1\bigr) - f\bigl(\omega^i\times0\bigr)\bigr]
\bigr)^2.
\]
\upqed\end{pf*}

\begin{remark}
\label{rem:LB-FE}
Observe that, up to \eqref{eq:BK-Var-Xi}, the proof only made use of
the lower
bound $\bbP(\omega_i = 1 |\omega^i )\geq\cFE=2p$ in \eqref{eq:FE}.
\end{remark}

\section{Applications}

\subsection{Proof of Proposition \texorpdfstring{\protect\ref{prop:4 arm torus}}{2.1}}
\mbox{}
\begin{pf}
Let $E_n$ be the event that there exists an open circuit with nontrivial
homotopy in $\bbT_n$. Theorem~\ref{thm:BK-Var} implies that
\[
\sum_{i\in E_{\bbT_n}}\bbP\bigl[\operatorname{Piv}_i(E_n)
\bigr]^2 \le \frac{1}{\cP}\bbP(E_n) \bigl(1-
\bbP(E_n)\bigr)\le\frac{1}{\cP}.
\]
Note that edges are of two types: either ``vertical'' or ``horizontal''.
By shift invariance of $\bbP$ and of the event $E_n$ we therefore
obtain, for
an horizontal edge $e$ (the same reasoning can be applied to vertical edges),
\[
|\bbT_n|\cdot\bbP\bigl[\tilde A_2^e(n)
\bigr]^2 \le \sum_{i\in E_{\bbT_n}}\bbP\bigl[
\operatorname{Piv}_i(E_n)\bigr]^2\le
\frac{1}{\cP}.
\]
\upqed\end{pf}

\subsection{Quantitative Burton--Keane argument}
Recall that we are working with (nearest neighbor) bond percolation models.
For $x\in\Lambda_n$, the set $\calC_n (x)$ is the connected
component of $x$
in the restriction of the percolation configuration to the edges with
at least
one end-point in $\Lambda_n$.

For $x\in\Lambda_n$, let $\operatorname{Trif}_{n}(x)$ be the event that:

\begin{longlist}[(a)]
\item[(a)] There are \emph{exactly} three open bonds incident to $x$.
\item[(b)]$\calC_n (x )\setminus x$ is a disjoint union of \emph{exactly} three
connected clusters, and each of these three clusters is connected to
$\partial\Lambda_{n+1} \stackrel{\mathrm{def}}{=}\Lambda
_{n+1}\setminus\Lambda_n$.
\end{longlist}

Recall the following classical fact \cite{BurKea89}:

\begin{lemma}
\label{lem:TrifBound}
Consider a percolation model on $\Lambda_{n}$. Then, for any $\omega$,
%
\begin{equation}
\label{eq:TrifBound} \sum_{x\in\Lambda_n} \1_{\operatorname{Trif}_{n}(x)} (\omega)
\le|\partial\Lambda_{n+1}|.
\end{equation}
\end{lemma}

Note that this lemma has the following useful consequence: if $\bbP$
is in fact
a translation invariant measure on the whole plane or on a torus, it implies
that
%
\begin{equation}
\label{trif} \bbP\bigl[\operatorname{Trif}_{2n}(0)\bigr]\le
\frac{ |\partial\Lambda
_{n+1}|}{|\Lambda_n|}.
\end{equation}
Let $k\le n$. Define the event $\operatorname{CoarseTrif}_{k,n}$ that there
are at
least
three distinct clusters in the annulus $A_{k,n}\stackrel{\mathrm
{def}}{=}\Lambda
_n\setminus
\Lambda_k$
connecting the inner to the outer boundaries of $A_{k,n}$. In other words,
there are at least three distinct crossing clusters of $A_{k,n}$.

\begin{corollary}\label{cor:coarse trifurcation}
Consider a percolation model on $\bbZ^d$ satisfying $(\mathsf{FE})$,
$(\mathsf{FKG})$ and $(\mathsf{TI})$. There exists $c_1=c_1(\cFE)>0$
such that, for any $0\le k\le n$,
\[
\bbP[\operatorname{CoarseTrif}_{k,n}]\le\frac{\exp(c_1k)}{n}.
\]
\end{corollary}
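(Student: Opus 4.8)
\emph{Strategy.} The plan is to charge a coarse trifurcation at scales $(k,n)$ against a genuine Burton--Keane trifurcation of the origin at a slightly smaller scale $n'=n-2$, paying a finite-energy price supported inside $\Lambda_{k+1}$, and then to invoke Lemma~\ref{lem:TrifBound} together with $(\mathsf{TI})$, which gives $\bbP[\mathrm{Trif}_{n'}(0)]\le |\partial\Lambda_{n'+1}|/|\Lambda_{n'}|\le c_d/n$. Concretely, I want to exhibit, for every configuration in $\mathrm{CoarseTrif}_{k,n}$, a modification of its states on a set $S$ of edges, all lying in $\Lambda_{k+1}$ and with $|S|\le C(d)\,k$, whose output lies in $\mathrm{Trif}_{n'}(0)$; moreover $S$ and the prescribed pattern on it will be functions of the configuration outside $S$ only. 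Granting this, the standard finite-energy bookkeeping yields $\bbP[\mathrm{Trif}_{n'}(0)]\ge \cFE^{\,C(d)k}\,\bbP[\mathrm{CoarseTrif}_{k,n}]$, whence $\bbP[\mathrm{CoarseTrif}_{k,n}]\le \cFE^{-C(d)k}\,c_d/n=\exp(c_1k)/n$ with $c_1=C(d)\log(1/\cFE)$ (small $n$ being trivial).

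\emph{The surgery.} On $\mathrm{CoarseTrif}_{k,n}$ there are three distinct crossing clusters of $A_{k,n}$; from each extract one crossing path of the sub-annulus $A_{k,n-1}$, obtaining three pairwise vertex-disjoint open paths $P_1,P_2,P_3$, where $P_i$ joins a vertex $y_i$ adjacent to $\Lambda_k$ to $\partial\Lambda_{n-1}$; these use no edge touching $\Lambda_k$, so rewiring inside $\Lambda_k$ leaves them intact. The modification: (i) open three self-avoiding paths $\gamma_1,\gamma_2,\gamma_3$ in $\Lambda_k$, pairwise vertex-disjoint except at $0$ and at pairwise distance $\ge 2$ away from $0$, of length $O(k)$, from $0$ to the vertices $z_i\in\partial\Lambda_k$ adjacent to $y_i$, and open the three radial edges $\{z_i,y_i\}$; (ii) close the at most $2d-3$ other edges at $0$; (iii) close every edge incident to $\gamma_1\cup\gamma_2\cup\gamma_3$ other than the edges of the $\gamma_i$ and the three radials; (iv) close all edges joining $\Lambda_k$ to $\bbZ^d\setminus\Lambda_k$ other than the three $\{z_i,y_i\}$. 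After this, $0$ has exactly three open edges; and in $\calC_{n'}(0)\setminus\{0\}$ the three arms cannot merge: by (iii)--(iv) each crossing cluster enters $\Lambda_k$ only through its own radial and then reaches only $0$ along the isolated path $\gamma_i$; in $A_{k,n-1}$ the three clusters are distinct by hypothesis; and beyond $\Lambda_{n'}$ the component $\calC_{n'}(0)$ reaches only pendant vertices, by its definition as the component inside the edges meeting $\Lambda_{n'}$, so no gluing happens there either. Each arm reaches $\partial\Lambda_{n-1}=\partial\Lambda_{n'+1}$, so the new configuration lies in $\mathrm{Trif}_{n'}(0)$.

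\emph{The main obstacle.} Steps (i)--(iii) modify $O(d\,k)$ edges in any dimension, so the only delicate point is (iv): one must neutralise, cheaply, the ways in which two crossing clusters could sneak into $\Lambda_k$ through untouched boundary edges and meet there. In $d=2$ there are only $O(k)$ edges joining $\Lambda_k$ to its complement, so (iv) costs $O(k)$ and the argument delivers the bound exactly as stated (equivalently one closes three dual paths of length $O(k)$ partitioning $\Lambda_k$ into three sectors). In higher dimensions the natural insulation along the three arms is a separating object of size $O(k^{d-1})$, and arranging it to be linear in $k$ while still forcing the split into exactly three clusters --- so that the exponent stays linear in $k$ --- is where the real work lies. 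Everything else is the finite-energy computation of the first paragraph combined with~\eqref{trif}.
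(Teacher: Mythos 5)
Your proposal reproduces, in expanded form, exactly the argument the paper sketches: condition on the annulus configuration, charge the coarse trifurcation to a genuine trifurcation at the origin by a local surgery inside $\Lambda_k$ of finite-energy cost, and conclude via Lemma~\ref{lem:TrifBound} and $(\mathsf{TI})$. Your description of the surgery (three pairwise disjoint self-avoiding arms from $0$ to representatives of the three crossing clusters, insulating the arms, pruning the degree of $0$ to three, and handling the boundary edges) is an honest fleshing-out of what the paper compresses into the one-line ``by conditioning on the clusters in $A_{k,n}$, one may easily check that $\bbP[\mathrm{Trif}_n(0)\mid\mathrm{CoarseTrif}_{k,n}]\ge\cFE^{6dk}$.''

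The point you flag in step~(iv) is genuine, and it is also the point the paper glosses over. To guarantee that $\calC_{n'}(0)\setminus\{0\}$ decomposes into exactly three pieces, one must rule out open paths inside $\Lambda_k$ connecting two of the crossing clusters through edges not controlled by the arm insulation, and the only deterministic way to do that is to close edges that separate the three clusters' entry sets on $\partial\Lambda_{k+1}$. In $d=2$ this is a dual-path/boundary-edge count of order $k$, so the surgery costs $\cFE^{O(k)}$ and the claimed bound follows. In $d\ge3$ the inner boundary has $\Theta(k^{d-1})$ edges and a separating object generically has $\Theta(k^{d-1})$ edges, so the naive surgery costs $\cFE^{O(k^{d-1})}$, not $\cFE^{O(k)}$. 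The paper's parenthetical about ``problems if the three clusters reach $\partial\Lambda_k$ near the corner'' is symptomatic of a two-dimensional picture; nothing in the proof explains how to keep the cost linear in $k$ when $d\ge3$. Note also that the linear-in-$k$ exponent is not cosmetic: it is what produces the $(\log n)^{d/2}$ rate in Theorem~\ref{prop:quantitative BK}; with an exponent $k^{d-1}$ one would only get $(\log n)^{d/(2(d-1))}$, which agrees with the paper's answer at $d=2$ but is strictly weaker for $d\ge3$. So you have not merely failed to fill a gap in your own write-up; you have correctly identified that the paper's proof, as written, does not justify the stated constant for $d\ge3$, and any complete argument must supply the missing idea for controlling the boundary insulation with only $O(k)$ finite-energy moves (or else weaken the exponent accordingly).
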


\begin{pf}
By conditioning on the clusters in $A_{k,n}$, one may easily check that
\[
\bbP\bigl[\operatorname{Trif}_{n}(0)|\operatorname{CoarseTrif}_{k,n}
\bigr] \ge {\cFE}^{6d k}.
\]
(There may be some problems if the three clusters reach $\partial
\Lambda_k$
near the corner, yet such cases can be treated separately.)
The result follows readily from Lemma~\ref{lem:TrifBound}.
\end{pf}

\begin{pf*}{Proof of Theorem~\ref{prop:quantitative BK}}
Set $\ep\stackrel{\mathrm{def}}{=}1/(2c_1)$ and let $k=\lfloor\ep
\log n\rfloor$. Let
$E_n$ be
the
event that there are exactly two clusters in $A_{k,n}$ from the inner
to the
outer boundaries. On $E_n$, let $\calC$ be the set of vertices of $A_{k,n}$
connected to the boundary of $\Lambda_n$ by an open path. Since there
are only
two distinct clusters connecting the inside and outside boundaries of
$A_{k,n}$,
the vertices of $\calC\cap\Lambda_k$ can be divided into two subsets
$E_1\stackrel{\mathrm{def}}{=}E_1(\calC)$ and $E_2\stackrel{\mathrm
{def}}{=}E_2(\calC)$ depending on which clusters
they belong
to.

For every possible realization $C$ of $\calC$ so that $\{\calC=C\}
\subset E_n$,
define $\operatorname{Cross}(C)$ to be the event that $E_1$ and $E_2$ are connected
by
an
open path inside $\Lambda_k$.
Theorem~\ref{thm:BK-Var} applied to $\bbP(\cdot|\calC=C)$ and
$\operatorname{Cross}(C)$
gives
\[
\sum_{e\in E_{\Lambda_k}}\bbP\bigl[\operatorname{Piv}_e
\bigl(\operatorname {Cross}(C)\bigr)|\calC=C\bigr]^2\le \frac{1}4,
\]
which, by Cauchy--Schwarz, implies that
\[
\sum_{e\in E_{\Lambda_k}}\bbP\bigl[\operatorname{Piv}_e
\bigl(\operatorname {Cross}(C)\bigr)|\calC=C\bigr]\le \frac{1}2
\sqrt{|E_{\Lambda_k}|}.
\]
For an edge $e'$, let $A_2(n,e')$ be the event that the two end-points
of $e'$
are connected to the boundary of $\Lambda_n$ by two disjoint clusters. By
definition of $C$, we have that
$\operatorname{Piv}_{e'}(\operatorname{Cross}(C))\cap\{\calC=C\}=A_2(n,e')\cap
\{\calC=C\}$
which, by summing over
all possible $C$, implies that
\[
\sum_{e'\in E_{\Lambda_k}}\bbP\bigl[A_2
\bigl(n,e'\bigr),E_n\bigr]\le \frac{1}2
\sqrt{|E_{\Lambda_k}|} \bbP(E_n)\le \frac{1}2
\sqrt{|E_{\Lambda_k}|}\le\sqrt{d|\Lambda_k|/2}.
\]
It only remains to see that Corollary~\ref{cor:coarse trifurcation} implies
\[
\bbP\bigl[A_2\bigl(n,e'\bigr),\operatorname{CoarseTrif}_{k,n}
\bigr]\le \bbP[\operatorname{CoarseTrif}_{k,n}]\leq\frac{\exp(c_1\ep\log
n)}{n}=
\frac{1}{\sqrt n}.
\]
At the end, we find that
\[
\frac{1}{|\Lambda_k|}\sum_{e'\in E_{\Lambda_k}}\bbP
\bigl[A_2\bigl(n,e'\bigr)\bigr]\le \frac{\sqrt{d/2}}{\sqrt{|\Lambda_k|}}+
\frac{1}{\sqrt n}.
\]
Consider for a moment that the edge $e$ involved in $A_2^e(2n)$ is horizontal.
Since $A_2^e(2n)$ is included in a translate of $A_2(n,e')$ for any
horizontal edge $e'\in E_{\Lambda_k}$, we conclude that
\[
\bbP\bigl[A_2^e(2n)\bigr]\le\frac{1}{|\Lambda_k|}\sum
_{e'\in
E_{\Lambda_k}}\bbP\bigl[A_2\bigl(n,e'
\bigr)\bigr]\le \frac{\sqrt{d/2}}{\sqrt{|\Lambda_k|}}+\frac{1}{\sqrt n},
\]
which proves the theorem with $\cBK=\cBK(\cFE)>0$ small enough
thanks to
our choice for $k$.
\end{pf*}

\subsection{Continuity of percolation probabilities away from critical points}
Theorem~\ref{thm:cont} is an easy consequence of the quantitative Burton--Keane
bound
\eqref{eq:QBK}. Let $\alpha_0\in(a,b)$. Pick $\varepsilon>0$ such that
$[\alpha_0-\varepsilon, \alpha_0 +\varepsilon]$ is still
in $(a,b)$. By our assumptions on the family $\{\bbP_\alpha\}$,
%
\begin{equation}
\label{eq:Lambda-box-n} \limsup_{n\to\infty} \sup_{\alpha\in[\alpha_0-\varepsilon, \alpha
_0 +\varepsilon]}
\bbP_\alpha(\partial\Lambda_n \nleftrightarrow\infty) =
\limsup_{n\to\infty}\bbP_{\alpha_0 -\varepsilon} (\partial
\Lambda_n \nleftrightarrow\infty)= 0.
\end{equation}
On the other hand, for any $N >n$ and any $\alpha\in[\alpha
_0-\varepsilon,
\alpha_0 +\varepsilon]$,
%
\begin{eqnarray}
\label{eq:Split-Pa} \theta(\alpha) &\stackrel{\operatorname{def}} {=}&\bbP_\alpha (0
\leftrightarrow\infty)\nonumber\\
&=& \bbP_\alpha(0\leftrightarrow\partial
\Lambda_N)
\\
&&{}- \bbP_\alpha(0\leftrightarrow\partial\Lambda_N; 0
\nleftrightarrow\infty; \partial\Lambda_n \leftrightarrow \infty)-
\bbP_\alpha(0\leftrightarrow\partial\Lambda_N; \partial
\Lambda_n \nleftrightarrow\infty).\nonumber
\end{eqnarray}
The event $\{0\leftrightarrow\partial\Lambda_N;
0\nleftrightarrow\infty; \partial\Lambda_n \leftrightarrow
\infty\}$
implies the existence of at least two disjoint crossings of the annulus $A_{n,
N}$. If we choose $N = \mathrm{e}^{C n}$ for some sufficiently large
constant $C$, then the second term in \eqref{eq:Split-Pa} tends to
zero as
$n\to\infty$, uniformly in $\alpha\in[\alpha_0-\varepsilon, \alpha_0
+\varepsilon]$; this follows from \eqref{eq:QBK}, which in view of the assumed
uniformity of $\mathsf{(FE)}$ on compact sub-intervals, yields uniform upper
bounds for $\alpha\in[\alpha_0-\varepsilon, \alpha_0 +\varepsilon]$.
The third
term in \eqref{eq:Split-Pa} is controlled by \eqref{eq:Lambda-box-n}.
Hence, since the events $\{0\leftrightarrow\partial\Lambda_N\}
$ are
local, continuity of $\bbP_\alpha$ at $\alpha_0$ implies
that $\theta$ is continuous at $\alpha_0$ as well. \qed

\subsection{Proof of Theorem \texorpdfstring{\protect\ref{prop:4 arm}}{2.4}}\label{sec:RSW}
Let us recall some additional facts on the random-cluster model. First, let
us introduce random-cluster measures with boundary conditions. Fix a finite
graph $G$. \emph{Boundary conditions} $\xi$ are given by a partition
$P_1\sqcup
\cdots\sqcup P_k$ of $\partial G$. Two vertices are \emph{wired in
$\xi$} if
they
belong to the same $P_i$. The graph obtained from the configuration
$\omega$ by
identifying the wired vertices together is denoted by $\omega^\xi$. Let
$k(\omega^\xi)$ be the number of connected components of the graph
$\omega^\xi$.
The probability measure $\bbP^{\xi}_{p,q,G}$ of the random-cluster
model on $G$
with \emph{edge-weight} $p\in[0,1]$, \emph{cluster-weight} $q>0$ and boundary
conditions $\xi$ is defined by
%
\begin{equation}
\label{probconf} \bbP_{p,q,G}^{\xi} \bigl( \{\omega \}\bigr)
\stackrel{\mathrm{def}} {=} \frac{p^{o(\omega)}(1-p)^{c(\omega)}q^{k(\omega^\xi)}}{
Z_{p,q,G}^{\xi}}
\end{equation}
for every configuration $\omega$ on $G$. The constant $Z_{p,q,G}^{\xi
}$ is a
normalizing constant, referred to as the \emph{partition function},
defined in
such a way that the sum over all configurations equals 1. For $q\ge1$,
infinite-volume random-cluster measures can be defined as weak limits of
random-cluster measures on larger and larger boxes.

Recall that the planar random-cluster model possesses a dual model on
the dual
graph $(\bbZ^2)^\star$. The
configuration $\omega^\star\in\{0,1\}^{E_{(\bbZ^2)^\star}}$ is
defined as
follows: each dual-edge $e^\star\in(\bbZ^2)^\star$ is dual-open in
$\omega^\star$ if and only if the edge of $\bbZ^2$ passing through
its middle
(there is a unique such edge) is closed in $\omega$. If the law of
$\omega$ is
$\bbP_{p,q,G}^{\xi}$, then the law of the dual model is
$\bbP_{p^\star,q,G^\star}^{\xi^\star}$ for some dual boundary conditions
$\xi^\star$. We will only use that free and wired boundary conditions
are dual
to each other.

On $\bbZ^2$, the random-cluster model undergoes a phase transition at some
parameter $p_c(q)$ satisfying, for every infinite-volume random-cluster model
$\bbP_{p,q,\bbZ^2}$ with parameters $p$ and $q$,
\[
\bbP_{p,q,\bbZ^2}[0\longleftrightarrow\infty] = \cases{ \theta(p,q)>0, &\quad $
\mbox{if $p>p_c(q)$,}$\vspace*{2pt}
\cr
0, & \quad$\mbox{if
$p<p_c(q)$.}$}
\]
The critical point of the planar random-cluster model on $\bbZ^2$ is
known to
correspond to the self-dual point of the model, that is,
$p_c(q)\stackrel{\mathrm{def}}{=}\sqrt q/(1+\sqrt q)$ \cite
{BefDum12}. Also, for $q\in
[1,4]$, the
behavior at criticality is
known
to be the following (see \cite{DumSidTas13}): there is a unique
infinite-volume measure and, for any numbers $1<a < b \leq\infty$, there
exists $\cRSW= \cRSW(a,b)>0$ such that, for all $n\geq1$ and any boundary
conditions~$\xi$,
%
\begin{equation}
\label{eq:RSW} \cRSW\le\bbP_{p_c,q,\widehat R_n}^\xi \bigl[
\operatorname{Cross}(R_n) \bigr]\leq1-\cRSW,
\end{equation}
where
$R_n = R_n [a] \stackrel{\mathrm{def}}{=}[-an,an]\times[-n,n]$,
$\widehat R_n = \widehat
R_n [b]
\stackrel{\mathrm{def}}{=}[-bn,bn]\times[-2n,2n]$ and $\mathrm
{Cross}(R_n)$ is the event
that the
left-hand
and right-hand sides of $R_n$ are connected by an open path in $R_n$.

\begin{pf*}{Proof of Theorem~\ref{prop:4 arm}}
Fix $q\in[1,4]$. Note that, for $p< p_c(q)$, there exists $c=c(p,q)>0$ such
that, for any edge $e$ and every $n\ge1$,
\[
\bbP_{p,q,\bbZ^2}\bigl[A_2^e(n)\bigr] \le
\bbP_{p,q,\bbZ^2}[0\longleftrightarrow\partial\Lambda_n] \le
e^{-c(p,q)n},
\]
thanks to exponential decay of correlations, see \cite{BefDum12} one more
time.
Similarly, when $p>p_c(q)$, for any edge $e$ and every $n\ge1$,
\[
\bbP_{p,q,\bbZ^2}\bigl[A_2^e(n)\bigr] =
\bbP_{p^\star,q,(\bbZ^2)^\star}\bigl[A_2(n)\bigr] \le \bbP_{p^\star,q,(\bbZ^2)^\star}\bigl[u
\stackrel{\star } {\longleftrightarrow} \partial\Lambda_n^\star
\bigr]\le e^{-c(p^\star,q)n}.
\]
The only interesting case is therefore the critical point $p=p_c$.

\subsubsection{Proof using mixing properties and Proposition \texorpdfstring{\protect\ref{prop:4 arm
torus}}{2.1}}

Recall that, by Proposition~\ref{prop:4 arm torus}, we already know
that, for
any edge $e'$ of $\bbT_n^{(2)}$ and every $n\geq1$,
\[
\bbP_{p_c,q,\bbT_n^{(2)}}\bigl[\tilde A_2^{e'}(n)\bigr] \le
\frac{c_{\tilde{\mathsf{A}}_2}}{n},
\]
where $\bbP_{p_c,q,\bbT_n^{(2)}}$ is the random-cluster measure on
$\bbT_n^{(2)}$. We therefore only need to show that there exists $C>0$
such that
\[
\bbP_{p_c,q,\bbZ^2}\bigl[A_2^e(n)\bigr]\le C
\bbP_{p_c,q,\bbT
_n^{(2)}}\bigl[\tilde A_2^{e'}(n)\bigr].
\]
Embed $\bbT_n^{(2)}$ into $\bbZ^2$ in such a way that the vertex set is
$\Lambda_n\stackrel{\mathrm{def}}{=}[-n,n]^2$ and $e$ is an edge
having 0 as an endpoint.
First, we
wish to highlight that \eqref{eq:RSW} (more precisely the mixing
result \cite{Dum13}, Theorem~5.45) classically implies the
existence of $c_1>0$ such that, for every boundary conditions $\xi$
and $n\ge1$,
\[
\bbP_{p_c,q,\bbZ^2}\bigl[A_2^e(n/2)\bigr]\le
c_1\bbP_{p_c,q,\Lambda_n}^\xi \bigl[A_2^e(n/2)
\bigr].
\]
In particular, this is also true for so-called \emph{periodic boundary
conditions}, so that
%
\begin{equation}
\label{eq:11} \bbP_{p_c,q,\bbZ^2}\bigl[A_2^e(n/2)\bigr]
\le c_1\bbP_{p_c,q,\bbT_n^{(2)}}\bigl[A_4^e(n/2)
\bigr].
\end{equation}
Now, introduce the event $A_2^\mathrm{sep}(n/2)$ that there exist two open
paths
$\gamma,\tilde\gamma$ and two dual-open dual-paths $\gamma^\star$ and
$\tilde\gamma^\star$, originating from the endpoints of $e$ and $e^*$,
respectively, satisfying:
\begin{itemize}
\item the endpoints (on the boundary of $\Lambda_{n/2}$ and
$\Lambda_{n/2}^\star$, resp.) $x$, $\tilde x$, $x^\star$ and
$\tilde
x^\star$ of the paths are at distance larger or equal to $\frac
{n}{10}$ from
each others.
\item$x$ and $\tilde x$ are connected to $\partial\Lambda_{
{3n}/{5}}$ in
$x+\Lambda_{{n}/{10}}$ and $\tilde x+\Lambda_{{n}/{10}}$.
\item$x^\star$ and $\tilde x^\star$ are connected to
$\partial\Lambda_{{3n}/{5}}^\star$ in $x^\star+\Lambda
_{{n}/{10}}$ and
$\tilde x^\star+\Lambda_{{n}/{10}}$.
\end{itemize}
Classically, \eqref{eq:RSW} implies that there exists $c_2>0$ such
that, for
any $n\ge1$,
%
\begin{equation}
\label{eq:12} \bbP_{p_c,q,\bbT_n^{(2)}}\bigl[A_2^e(n/2)\bigr]
\le c_2 \bbP_{p_c,q,\bbT_n^{(2)}}\bigl[A_2^\mathrm{sep}(n/2)
\bigr].
\end{equation}
See \cite{Nol08} for a treatment in the case of Bernoulli percolation
and \cite{Dum13}, Theorems~10.22 and 10.23, for the FK--Ising model
(the proofs of
the theorems apply {mutatis mutandis} to any random-cluster
model with
$1\le q\le4$).

It remains to see that there exists $c_3>0$ such that, for any $n\ge1$,
\[
\bbP_{p_c,q,\bbT_n^{(2)}}\bigl[A_2^\mathrm{sep}(n/2)\bigr] \le
c_3 \bbP_{p_c,q,\bbT_n^{(2)}}\bigl[\tilde A_2^e(n)
\bigr].
\]
In order to do so, mimic the classical argument to prove quasi-multiplicativity
of arm-probabilities for Bernoulli percolation (see \cite{Nol08} again and
Figure~\ref{fig:1}).

\begin{figure}

\includegraphics{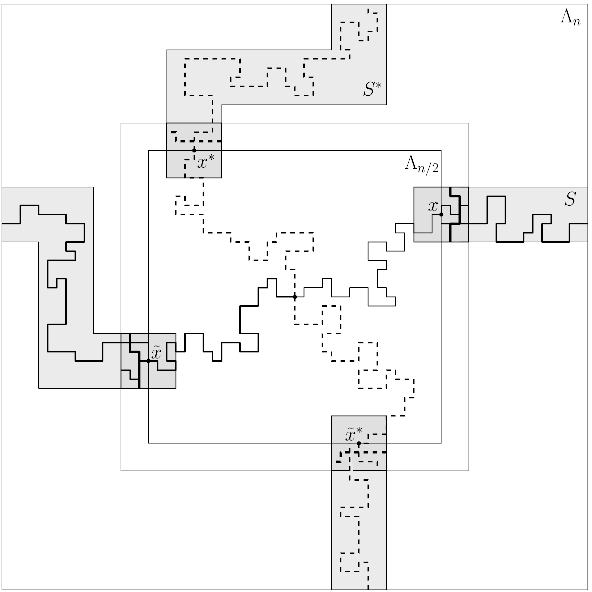}

\caption{The event $A_2^\mathrm{sep}(n/2)$ together with the
extension of the
four
paths using the sets $S$ and $S^*$. Estimates on crossing probabilities
available from \cite{DumSidTas13} show that these extensions cost a
multiplicative constant (not depending on $n$).}
\label{fig:1}
\end{figure}

We only sketch the proof.
Condition on $A_2^\mathrm{sep}(n/2)$. Consider a thin area $S$ of
width $\frac{n}{10}$ going from $x+\Lambda_{{n}/{10}}$ to $\tilde x+\Lambda
_{{n}/{10}}$ outside $\Lambda_{{n}/2}$, an a thin dual area $S^\star$
of width
$\frac{n}{10}$ going from $x^\star+\Lambda_{{n}/{10}}$ to $\tilde
x^\star+\Lambda_{{n}/{10}}$ outside $\Lambda_{{n}/2}$ so
that these two
areas do not intersect. Now, \eqref{eq:RSW} implies that there exist, with
probability $c_4>0$, a primal path in $S$ connecting the two paths
$\gamma$ and
$\tilde\gamma$, and a dual path in $S^\star$ connecting $\gamma
^\star$ and
$\tilde\gamma^\star$. But whenever this occurs, the event $\tilde
A_2^e(n)$ is
satisfied, so that
\[
\bbP_{p_c,q,\bbT_n^{(2)}}\bigl[\tilde A_4^e(n)\bigr] \ge
c_4 \bbP_{p_c,q,\bbT_n^{(2)}}\bigl[A_2^\mathrm{sep}(n/2)
\bigr].
\]
It only remains to invoke \eqref{eq:11} and \eqref{eq:12} to conclude.
\end{pf*}

\subsubsection{Proof using bounds on \texorpdfstring{$\bbE_{p_c,q,\bbZ^2} (N_{m,n
}^2)$}{E{pc,q,Z2}(N{m,n}2)}
and \texorpdfstring{\protect\eqref{eq:N-bound-cor}}{(2.4)}}
We shall check that there exists $\cN= \cN(\cRSW)<\infty$,
such that, uniformly in $m$,
%
\begin{equation}
\label{eq:Nsqr} \bbE_{p_c,q,\bbZ^2} \bigl(N_{m,5m }^2\bigr)
\leq\cN.
\end{equation}
A substitution into \eqref{eq:N-bound-cor} yields the claim.

Consider the annulus $A_{m, 5m}$ and the four rectangles
%
\begin{eqnarray}\qquad
\label{eq:strips} S_{m, \mathsf U}& =& [-5m, 5m]\times[m, 5m],\qquad S_{m, \mathsf R} =
[-5m, -m]\times[-5m, 5m],
\nonumber
\\[-8pt]
\\[-8pt]
\nonumber
S_{m, \mathsf L}& =& [m, 5m]\times[-5m, 5m],\qquad S_{m, \mathsf D} = [-5m, 5m]
\times[-5m, -m].
\end{eqnarray}
For $*\in\{{\mathsf U}, {\mathsf R}, {\mathsf L},{\mathsf D}\}$, let $N_{m,*} $ be the number
of distinct short-side crossing clusters of $S_{m *}$. For instance
$N_{m,\mathsf
U}$ is the
number of distinct clusters which connect $[-5m, 5m]\times\{m\}$ to
$[-5m, 5m]\times\{5m\}$ in the restriction of the percolation
configuration to the rectangle $S_{m, \mathsf U}$. Clearly,
\[
N_{m, 5m} \leq\sum_{*\in\{{\mathsf U}, {\mathsf R}, {\mathsf L},{\mathsf D}\}} N_{m,*},
\]
and, by symmetry, it remains to give an upper bound on
$\bbE_{p_c,q,\bbZ^2} (N_{m, {\mathsf U} }^2)$.

\begin{lemma}
\label{lem:Cl-size}
The RSW bound \eqref{eq:RSW} implies
\begin{equation}
\label{eq:Cl-size} \bbP_{p_c,q,\bbZ^2} (N_{m, {\mathsf U} } \geq k )\leq \bigl(1-
\cRSW(5,\infty)\bigr)^{k-1},
\end{equation}
uniformly in $k>1$ and $m$.
\end{lemma}

\begin{pf}
Let us introduce the events $\calR_k \stackrel{\mathrm{def}}{=}\{
N_{m, {\mathsf U} } \geq
k\}$.
We claim that
%
\begin{equation}
\label{eq:cond-k} \bbP_{p_c,q,\bbZ^2} (\calR_{k} |
\calR_{k-1})\leq 1- \cRSW(5,\infty),
\end{equation}
uniformly in $m$ and $k > 1$. Indeed, distinct crossing clusters which
show up
in any percolation configuration from $\calR_{k-1}$ are naturally
ordered from
left
to right. There are at least $(k-1)$ such clusters. The following somewhat
standard construction, which we sketch below, is depicted on
Figure~\ref{fig:cross}.

\begin{figure}

\includegraphics{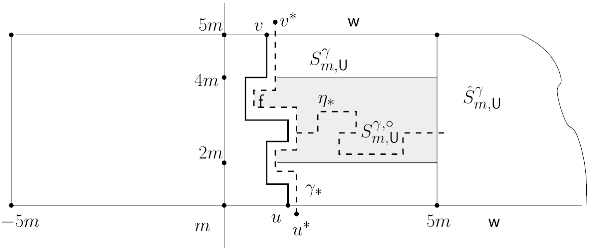}

\caption{The dual path $\gamma_* = \gamma_*^{u,v}$. Event $\calD
^\circ$: a dual
path $\eta_*$ crosses from left to right the middle section $S_{m,\mathsf
U}^{\gamma,\circ}$ of $S_{m,\mathsf U}^\gamma$ and, as such, rules out the
occurrence of the event $\calR_k$.
The boundary conditions (for the direct model) on the semi-infinite
strip $\hat
S_{m, \mathsf U}^\gamma$ are $\mathsf{w}$ on the upper and lower parts, and
$\mathsf{f}$ on~$\gamma_*$.}
\label{fig:cross}
\end{figure}

Consider the disjoint decomposition $\calR_{k-1} = \cup\calR_{k-1}^{u,v}$,
where $u$ (resp., $v$) is the rightmost vertex of the $(k-1)$th crossing
cluster on the bottom (resp., top) side of $S_{m, \mathsf U}$. The event
$\calR_{k-1}^{u,v}$ implies that there is the left-most dual crossing
$\gamma^{u,v}_*$ from $u^*$ to $v^*$,\vadjust{\goodbreak} where $u^* \stackrel{\mathrm
{def}}{=}u + \frac
{1}{2} (1,-1)$
and $v^* \stackrel{\mathrm{def}}{=}v +\frac{1}{2} (1,1)$. Consider
the remaining part,
denoted by
$S_{m, \mathsf U}^\gamma$, of the rectangle
$S_{m, \mathsf U}$ to the right of $\gamma^{u,v}_*$. Let $S_{m, \mathsf
U}^{\gamma,
\circ}$ be the middle section of $S_{m,\mathsf U}^\gamma$, that is, $S_{m,
\mathsf
U}^{\gamma, \circ} \stackrel{\mathrm{def}}{=}S_{m, \mathsf U}^\gamma
\cap(
\bbZ\times[2m, 4m])$. Finally, consider the
infinite strip extension $\hat S_{m, \mathsf U}^\gamma$ to the right of
$S_{m, \mathsf
U}^\gamma$.

Let $\calD^{\circ}$ be the event that there is a left to right dual
crossing of
$S_{m, \mathsf U}^{\gamma, \circ}$. By the FKG property of the random-cluster
model,
\[
\bbP_{p_c,q,\bbZ^2} (\calR_{k} | \calR_{k-1})\leq1- \min
_{\gamma} \bbP_{p_c,q,\hat S_{m, \mathsf U}^\gamma}^{\mathsf{w}, \mathsf{f}} \bigl(
\calD^{\circ}\bigr),
\]
where the boundary conditions are \emph{direct} boundary conditions on the
semi-infinite strip $\hat S_{m, \mathsf U}^\gamma$: wired on upper and
lower parts
and free on $\gamma$. Note that the model is self-dual at criticality.
Hence, for any possible realization of $\gamma$,
\[
\bbP_{p_c,q,\hat S_{m, \mathsf U}^\gamma}^{\mathsf{w}, \mathsf{f}} \bigl( \calD^{\circ}\bigr)\geq
\bbP_{p_c,q,\hat R_m [\infty]}^{\mathsf{f}} \bigl( \operatorname{Cross} \bigl(R_m
[5 ]\bigr)\bigr),
\]
and \eqref{eq:RSW} applies.
\end{pf}

\begin{remark}
Let us highlight the fact that SLE predictions,
see \cite{Dum13}, Section~13.3.2, suggest that
$\bbP_{p,q,\bbZ^2}[A_4^e(n)]=n^{-\xi_{1010}+o(1)}$, where
\[
\xi_{1010}\stackrel{\mathrm{def}} {=}\frac{3\sigma^2+10\sigma
+3}{4(1+\sigma)} \qquad\mbox{with }
\sigma\stackrel{\mathrm{def}} {=}\frac{2}\pi \arcsin(\sqrt q/2).
\]
This implies that $\bbP_{p,q,\bbZ^2}[A_4^e(n)]\gg\frac{1}n$ for
$q<2\arcsin[\pi\frac{2-\sqrt3}{\sqrt3}]\approx0.459$. This
illustrates the
fact that the claim of Theorem~\ref{thm:BK-Var} can fail to hold when the
condition $(\mathsf{FKG})$ is dropped.\vadjust{\goodbreak}
\end{remark}

\section*{Acknowledgment}
The authors thank Michael Aizenman for suggesting the use of the
number of
distinct clusters in Corollary~\ref{cor:N-bound-cor}.


\begin{thebibliography}{21}


\bibitem{Aiz97}
\begin{barticle}[mr]
\bauthor{\bsnm{Aizenman},~\bfnm{Michael}\binits{M.}}
(\byear{1997}).
\btitle{On the number of incipient spanning clusters}.
\bjournal{Nuclear Phys. B}
\bvolume{485}
\bpages{551--582}.
\bid{doi={10.1016/S0550-3213(96)00626-8}, issn={0550-3213}, mr={1431856}}
\end{barticle}
%

\bptok{imsref}%
\endbibitem

\bibitem{ACCN}
\begin{barticle}[mr]
\bauthor{\bsnm{Aizenman},~\bfnm{M.}\binits{M.}},
\bauthor{\bsnm{Chayes},~\bfnm{J.~T.}\binits{J.~T.}},
\bauthor{\bsnm{Chayes},~\bfnm{L.}\binits{L.}} \AND
\bauthor{\bsnm{Newman},~\bfnm{C.~M.}\binits{C.~M.}}
(\byear{1988}).
\btitle{Discontinuity of the magnetization in one-dimensional {$1/\vert x-y\vert ^ 2$} {I}sing and {P}otts models}.
\bjournal{J. Stat. Phys.}
\bvolume{50}
\bpages{1--40}.
\bid{doi={10.1007/BF01022985}, issn={0022-4715}, mr={0939480}}
\end{barticle}
%

\bptok{imsref}%
\endbibitem

\bibitem{AD-CS2014}
\begin{barticle}[author]
\bauthor{\bsnm{Aizenman},~\bfnm{Michael}\binits{M.}},
\bauthor{\bsnm{Duminil-Copin},~\bfnm{Hugo}\binits{H.}} \AND
\bauthor{\bsnm{Sidoravicius},~\bfnm{Vladas}\binits{V.}}
(\byear{2015}).
\btitle{Random currents and continuity of {I}sing model's spontaneous magnetization}.
\bjournal{Comm. Math. Phys.}
\bvolume{334}
\bpages{719--742}.
\bid{mr={3306602}}
\end{barticle}
%

\bptok{imsref}%
\endbibitem

\bibitem{AizKesNew87}
\begin{barticle}[mr]
\bauthor{\bsnm{Aizenman},~\bfnm{M.}\binits{M.}},
\bauthor{\bsnm{Kesten},~\bfnm{H.}\binits{H.}} \AND
\bauthor{\bsnm{Newman},~\bfnm{C.~M.}\binits{C.~M.}}
(\byear{1987}).
\btitle{Uniqueness of the infinite cluster and continuity of connectivity functions for short and long range percolation}.
\bjournal{Comm. Math. Phys.}
\bvolume{111}
\bpages{505--531}.
\bid{issn={0010-3616}, mr={0901151}}
\end{barticle}
%

\bptok{imsref}%
\endbibitem

\bibitem{BefDum12}
\begin{barticle}[mr]
\bauthor{\bsnm{Beffara},~\bfnm{Vincent}\binits{V.}} \AND
\bauthor{\bsnm{Duminil-Copin},~\bfnm{Hugo}\binits{H.}}
(\byear{2012}).
\btitle{The self-dual point of the two-dimensional random-cluster model is critical for {$q\geq 1$}}.
\bjournal{Probab. Theory Related Fields}
\bvolume{153}
\bpages{511--542}.
\bid{doi={10.1007/s00440-011-0353-8}, issn={0178-8051}, mr={2948685}}
\end{barticle}
%

\bptok{imsref}%
\endbibitem

\bibitem{Bod2006}
\begin{barticle}[mr]
\bauthor{\bsnm{Bodineau},~\bfnm{T.}\binits{T.}}
(\byear{2006}).
\btitle{Translation invariant {G}ibbs states for the {I}sing model}.
\bjournal{Probab. Theory Related Fields}
\bvolume{135}
\bpages{153--168}.
\bid{doi={10.1007/s00440-005-0457-0}, issn={0178-8051}, mr={2218869}}
\end{barticle}
%

\bptok{imsref}%
\endbibitem

\bibitem{BroHam57}
\begin{barticle}[mr]
\bauthor{\bsnm{Broadbent},~\bfnm{S.~R.}\binits{S.~R.}} \AND
\bauthor{\bsnm{Hammersley},~\bfnm{J.~M.}\binits{J.~M.}}
(\byear{1957}).
\btitle{Percolation processes. I. {C}rystals and mazes}.
\bjournal{Proc. Cambridge Philos. Soc.}
\bvolume{53}
\bpages{629--641}.
\bid{mr={0091567}}
\end{barticle}
%

\bptok{imsref}%
\endbibitem

\bibitem{BurKea89}
\begin{barticle}[mr]
\bauthor{\bsnm{Burton},~\bfnm{R.~M.}\binits{R.~M.}} \AND
\bauthor{\bsnm{Keane},~\bfnm{M.}\binits{M.}}
(\byear{1989}).
\btitle{Density and uniqueness in percolation}.
\bjournal{Comm. Math. Phys.}
\bvolume{121}
\bpages{501--505}.
\bid{issn={0010-3616}, mr={0990777}}
\end{barticle}
%

\bptok{imsref}%
\endbibitem

\bibitem{Cerf13}
\begin{barticle}[author]
\bauthor{\bsnm{Cerf},~\bfnm{Rafa\"{e}l}\binits{R.}}
(\byear{2013}).
\btitle{A lower bound on the two-arms exponent for critical percolation on the lattice}.
\bjournal{Ann. Probab.}
\bvolume{43}
\bpages{2458--2480}.
\bid{mr={3395466}}
\end{barticle}
%

\bptok{imsref}%
\endbibitem

\bibitem{ChS13}
\begin{bmisc}[author]
\bauthor{\bsnm{Chatterjee},~\bfnm{Sourav}\binits{S.}} \AND
\bauthor{\bsnm{Sen},~\bfnm{Sanchayan}\binits{S.}}
(\byear{2013}).
\bhowpublished{Minimal spanning trees and Stein's method.
Preprint. Available at \arxivurl{arXiv:1307.1661}}.
\end{bmisc}
%

\bptok{imsref}%
\endbibitem

\bibitem{Daly}
\begin{bmisc}[author]
\bauthor{\bsnm{Daly},~\bfnm{F.}\binits{F.}}
(\byear{2015}).
\bhowpublished{Negative dependence and stochastic orderings.
Available at \arxivurl{arXiv:1504.06493}}.
\end{bmisc}
%

\bptok{imsref}%
\endbibitem

\bibitem{Dum13}
\begin{bbook}[author]
\bauthor{\bsnm{Duminil{-}Copin},~\bfnm{H.}\binits{H.}}
(\byear{2013}).
\btitle{Parafermionic Observables and Their Applications to Planar Statistical Physics Models}.
\bseries{Ensaios Matem\'{a}ticos}
\bvolume{25}.
\bpublisher{Sociedade Brasileira de Matem\'{a}tica},
\blocation{Rio de Janeiro}.
\bid{mr={3184487}}
\end{bbook}
%

\bptok{imsref}%
\endbibitem

\bibitem{DumSidTas13}
\begin{bmisc}[author]
\bauthor{\bsnm{Duminil-Copin},~\bfnm{H.}\binits{H.}},
\bauthor{\bsnm{Sidoravicius},~\bfnm{V.}\binits{V.}} \AND
\bauthor{\bsnm{Tassion},~\bfnm{V.}\binits{V.}}
(\byear{2013}).
\bhowpublished{Continuity of the phase transition for planar {P}otts models with $1\le q\le 4$.
Preprint}.
\end{bmisc}
%

\bptok{imsref}%
\endbibitem

\bibitem{ForKas72}
\begin{barticle}[mr]
\bauthor{\bsnm{Fortuin},~\bfnm{C.~M.}\binits{C.~M.}} \AND
\bauthor{\bsnm{Kasteleyn},~\bfnm{P.~W.}\binits{P.~W.}}
(\byear{1972}).
\btitle{On the random-cluster model. I. {I}ntroduction and relation to other models}.
\bjournal{Physica}
\bvolume{57}
\bpages{536--564}.
\bid{mr={0359655}}
\end{barticle}
%

\bptok{imsref}%
\endbibitem

\bibitem{GanGriRus88}
\begin{barticle}[mr]
\bauthor{\bsnm{Gandolfi},~\bfnm{A.}\binits{A.}},
\bauthor{\bsnm{Grimmett},~\bfnm{G.}\binits{G.}} \AND
\bauthor{\bsnm{Russo},~\bfnm{L.}\binits{L.}}
(\byear{1988}).
\btitle{On the uniqueness of the infinite cluster in the percolation model}.
\bjournal{Comm. Math. Phys.}
\bvolume{114}
\bpages{549--552}.
\bid{issn={0010-3616}, mr={0929129}}
\end{barticle}
%

\bptok{imsref}%
\endbibitem

\bibitem{GS12}
\begin{bincollection}[mr]
\bauthor{\bsnm{Garban},~\bfnm{Christophe}\binits{C.}} \AND
\bauthor{\bsnm{Steif},~\bfnm{Jeffrey~E.}\binits{J.~E.}}
(\byear{2012}).
\btitle{Noise sensitivity and percolation}.
In \bbooktitle{Probability and Statistical Physics in Two and More Dimensions}.
\bseries{Clay Math. Proc.}
\bvolume{15}
\bpages{49--154}.
\bpublisher{Amer. Math. Soc.},
\blocation{Providence, RI}.
\bid{mr={3025390}}
\end{bincollection}
%

\bptok{imsref}%
\endbibitem

\bibitem{GraGri11}
\begin{barticle}[mr]
\bauthor{\bsnm{Graham},~\bfnm{Benjamin}\binits{B.}} \AND
\bauthor{\bsnm{Grimmett},~\bfnm{Geoffrey}\binits{G.}}
(\byear{2011}).
\btitle{Sharp thresholds for the random-cluster and {I}sing models}.
\bjournal{Ann. Appl. Probab.}
\bvolume{21}
\bpages{240--265}.
\bid{doi={10.1214/10-AAP693}, issn={1050-5164}, mr={2759201}}
\end{barticle}
%

\bptok{imsref}%
\endbibitem

\bibitem{Gri06}
\begin{bbook}[mr]
\bauthor{\bsnm{Grimmett},~\bfnm{Geoffrey}\binits{G.}}
(\byear{2006}).
\btitle{The Random-Cluster Model}.
\bseries{Grundlehren der Mathematischen Wissenschaften}
\bvolume{333}.
\bpublisher{Springer},
\blocation{Berlin}.
\bid{doi={10.1007/978-3-540-32891-9}, mr={2243761}}
\end{bbook}
%

\bptok{imsref}%
\endbibitem

\bibitem{Gri99}
\begin{bincollection}[mr]
\bauthor{\bsnm{Grimmett},~\bfnm{Geoffrey~R.}\binits{G.~R.}}
(\byear{1999}).
\btitle{Inequalities and entanglements for percolation and random-cluster models}.
In \bbooktitle{Perplexing Problems in Probability}.
\bseries{Progress in Probability}
\bvolume{44}
\bpages{91--105}.
\bpublisher{Birkh\"auser},
\blocation{Boston, MA}.
\bid{mr={1703126}}
\end{bincollection}
%

\bptok{imsref}%
\endbibitem

\bibitem{Kes82}
\begin{bbook}[mr]
\bauthor{\bsnm{Kesten},~\bfnm{Harry}\binits{H.}}
(\byear{1982}).
\btitle{Percolation Theory for Mathematicians}.
\bseries{Progress in Probability and Statistics}
\bvolume{2}.
\bpublisher{Birkh\"auser},
\blocation{Boston, MA}.
\bid{mr={0692943}}
\end{bbook}
%

\bptok{imsref}%
\endbibitem

\bibitem{Nol08}
\begin{barticle}[mr]
\bauthor{\bsnm{Nolin},~\bfnm{Pierre}\binits{P.}}
(\byear{2008}).
\btitle{Near-critical percolation in two dimensions}.
\bjournal{Electron. J. Probab.}
\bvolume{13}
\bpages{1562--1623}.
\bid{doi={10.1214/EJP.v13-565}, issn={1083-6489}, mr={2438816}}
\end{barticle}
%

\bptok{imsref}%
\endbibitem
\end{thebibliography}
%




\printaddresses
\end{document}